\newtheorem{thm}{Theorem}[section]
\newtheorem{cor}[thm]{Corollary}
\newtheorem{lem}[thm]{Lemma}
\newtheorem{prop-def}[thm]{Proposition-Definition}
\theoremstyle{definition}
\newtheorem{defi}[thm]{Definition}
\theoremstyle{remark}
\numberwithin{equation}{section}
\numberwithin{figure}{section}
\def\A{\mathcal{A}}
\def\B{\mathcal{B}}
\def\C{\mathbb{C}}
\def \Dg{\mathfrak{D}}
\def \i{\mathbf{i}}
\def\I{\mathcal{I}}
\def\In{\Gamma}
\def\la{\lambda}
\newcommand{\lr}[1]{\langle#1\rangle}
\def\PS{\mathbb{S}}
\def \PV{\mathbb{V}}
\def \S{\mathcal{S}}
\def \Spec{\mbox{\rm Spec}}
\def \Tr{\mbox{Tr}}
\def\u{{\mathbf u}}
\def\x{{\mathbf x}}
\def\Z{\mathbb{Z}}
\begin{document}
\title[Stabilizing index and cyclic index of hypergraphs]
{The stabilizing index and cyclic index of coalescence and Cartesian product of uniform hypergraphs}

\author[Y.-Z. Fan]{Yi-Zheng Fan$^*$}
\address{School of Mathematical Sciences, Anhui University, Hefei 230601, P. R. China}
\email{fanyz@ahu.edu.cn}
\thanks{$^*$The corresponding author.
This work was supported by National Natural Science Foundation of China (Grant No. 11871073).}

\author[M.-Y. Tian]{Meng-Yu Tian}
\address{School of Mathematical Sciences, Anhui University, Hefei 230601, P. R. China}
\email{tianmy@stu.ahu.edu.cn}

\author[M. Li]{Min Li}
\address{School of Mathematical Sciences, Anhui University, Hefei 230601, P. R. China}
\email{lim@stu.ahu.edu.cn}

\subjclass[2000]{Primary 15A18, 05C65; Secondary 13P15, 14M99}

\keywords{Uniform hypergraph; adjacency tensor; spectral radius; stabilizing index; cyclic index}

\begin{abstract}
Let $G$ be connected uniform hypergraph and let $\A(G)$ be the adjacency tensor of $G$.
The stabilizing index of $G$ is the number of eigenvectors of $\A(G)$ associated with the spectral radius, and
the cyclic index of $G$ is the number of eigenvalues of $\A(G)$ with modulus equal to the spectral radius.
Let $G_1 \odot G_2$ and $G_1 \Box G_2$ be the coalescence and Cartesian product of connected $m$-uniform hypergraphs $G_1$ and $G_2$ respectively.
In this paper, we give explicit formulas for the the stabilizing indices and cyclic indices of  $G_1 \odot G_2$ and $G_1 \Box G_2$ in terms of those of $G_1$ and $G_2$ or the invariant divisors of their incidence matrices over $\Z_m$, respectively.
\end{abstract}

\maketitle

\section{Introduction}
A \emph{tensor} $\A=(a_{i_{1} i_2 \ldots i_{m}})$ of order $m$ and dimension $n$ over a field $\mathbb{F}$ refers to a
 multiarray of entries $a_{i_{1}i_2\ldots i_{m}}\in \mathbb{F}$ for all $i_{j}\in [n]:=\{1,2,\ldots,n\}$ and $j\in [m]$, which can be viewed to be the coordinates of the classical tensor (as a multilinear function) under an orthonormal basis.
A \emph{hypergraph} $G=(V,E)$ consists of a vertex set $V=\{v_1,v_2,{\cdots},v_n\}$ denoted by $V(G)$ and an edge set $E=\{e_1,e_2,{\cdots},e_k\}$ denoted by $E(G)$,
 where $e_i \subseteq V$ for $i \in [k]:=\{1,2,\cdots,k\}$.
 If $|e_i|=m$ for each $i \in [k]$ and $m \geq2$, then $G$ is called an \emph{$m$-uniform} hypergraph.

 Lim \cite{Lim} and Qi \cite{Qi} introduced the eigenvalues of tensors independently in 2005.
In 2012 Cooper and Dutle\cite{CD} introduced the adjacency tensor of uniform hypergraphs, and use the eigenvalues of the tensor to
characterize the structure of hypergraphs.
Recently the spectral hypergraph theory has been an active topic in graph and hypergraph theory \cite{BL,FBH, FHB, FanTPL, LM, Ni1, Ni, PZ, SQH, ZKSB, zhou}.

By the Perron-Frobenius theorem of  nonnegative tensors \cite{CPZ1,FGH,YY1,YY2,YY3},
for an irreducible or weakly irreducible nonnegative tensor $\A$ of order $m$,
the spectral radius $\rho(\A)$ is an eigenvalue of $\A$ associated with a unique positive eigenvector up to a scalar, called the \emph{Perron vector} of $\A$.
If $m \ge 3$, $\A$ can have more than one eigenvector associated with $\rho(\A)$, which is different from the case of matrices (of order $m=2$).
They are two problems related to the spectral radius:
(P1) the number of eigenvectors of $\A$ associated with $\rho(\A)$, and (P2) the number of eigenvalues of $\A$ with modulus equal to $\rho(\A)$.

For the problem (P1), Fan et al. \cite{FBH} introduced the stabilizing index $s(\A)$ for a general tensor $\A$ of order $m$, and
showed that $s(\A)$ is exactly the answer to (P1) if $\A$ is nonnegative weakly irreducible.
If $\A$ is further symmetric, then $s(\A)$ can be obtained explicitly by the Smith normal form of the incidence matrix of $\A$ over $\Z_m$.

Formally, let $\A$ be a tensor of order $m$ and dimension $n$.
Define
\begin{equation}\label{D0}
\Dg^{(0)}(\A)=\{D: D^{-(m-1)}\A D=\A, d_{11}=1\},
\end{equation}
where $D$ is an $n \times n$ invertible diagonal matrix normalized such that $d_{11}=1$, and the product here is defined in \cite{Shao}.
It was proved $\Dg^{(0)}(\A)$ is an abelian group under the usual matrix multiplication, and is only determined by the support
or the zero-nonzero pattern of $\A$ (\cite[Lemmas 2.5-2.6]{FBH}).

\begin{defi}[\cite{FBH}]
For a general tensor $\A$, the cardinality of the abelian group $\Dg^{(0)}(\A)$, denoted by $s(\A)$, is called the \emph{stabilizing index} of $\A$.
\end{defi}

For the problem (P2), if $\A$ is nonnegative weakly irreducible,
the number of eigenvalues of $\A$ with modulus equal to $\rho(\A)$ is called the \emph{cyclic index} of $\A$ by Chang et al. \cite{CPZ2}, denoted by $c(\A)$.
By Perron-Frobenius theroem, $\Spec(\A)=e^{\i \frac{2\pi}{c(\A)}}\Spec(\A)$ in this case, where $\Spec(\A)$ denotes the spectrum of $\A$.
So $c(\A)$ implies the spectral symmetry of tensors.
Fan et al. \cite{FHB} defined the spectral symmetry for a general tensor.

\begin{defi}[\cite{FHB}]\label{ell-sym}
Let $\A$ be a general tensor, and let $\ell$ be a positive integer.
The tensor $\A$ is called {\it spectral $\ell$-symmetric} if
\begin{equation}\label{sym-For}
\Spec(\A)=e^{\i \frac{2\pi}{\ell}}\Spec(\A).
\end{equation}
The maximum $\ell$ such that (\ref{sym-For}) holds is the {\it cyclic index} of $\A$,  denoted by $c(\A)$.
\end{defi}

If $\A$ is nonnegative weakly irreducible, then $c(\A)$ in Definition \ref{ell-sym} is consistent with that given in \cite{CPZ2}.
If $\A$ is further symmetric of order $m$, Fan et al. \cite{FHB} characterized the spectral $\ell$-symmetry by using $(m,\ell)$-coloring of $\A$.
In general, it was shown in \cite{FHB} that
$$c(\A)=\hbox{gcd}\{d: \Tr_d(\A) \ne 0\},$$
where $\Tr_d(\A)$ is the generalized $d$-th order trace of $\A$ (\cite{MS,SQH}).
However, it is very difficult to compute the generalized traces of a tensor.

In this paper we will discuss the problems (P1)  and (P2) for the adjacency tenor of
 the coalescence $G_1 \odot G_2$ and Cartesian product $G_1 \Box G_2$ of connected $m$-uniform hypergraphs $G_1$ and $G_2$.
By the above discussion, the answers to (P1) and (P2) in this situation are equivalent to determine the stabilizing index and cyclic index.
We give explicit formulas for the the stabilizing indices and cyclic indices of  $G_1 \odot G_2$ and $G_1 \Box G_2$ in terms of those of $G_1$ and $G_2$ or the invariant divisors of their incidence matrices, respectively,
where the \emph{stabilizing index} and \emph{cyclic index} of a uniform hypergraph $G$ are  referring to its adjacency tensor $\A(G)$, denoted by $s(G)$ and $c(G)$ respectively.

Some notations are used throughout the paper.
Let $S$ be a nonempty set.
Denote by $\I_S$ (respectively, $I_S$, $\mathbf{1}_S$) an identity tensor (respectively, identity matrix, all-one vector) with entries indexed by the elements of $S$.
Sometimes we use $\I, I, \mathbf{1}$ if there exists no confusion.
Denote by $O$ and $\mathbf{0}$ respectively a zero matrix and a zero vector of whose size can be implicated by the context.
Denote $\lr{a,b}:=\gcd(a,b)$.

\section{Preliminaries}
\subsection{Tensors and hypergraphs}
Let $\A=(a_{i_{1} i_2 \ldots i_{m}})$ be a tensor of order $m$ and dimension $n$ over a $\C$.
If all entries $a_{i_1i_2\cdots i_m}$ of $\A$ are invariant under any permutation of its indices, then $\A$ is called a \emph{symmetric tensor}.
The irreducibility or weakly irreducibility of a tensor can be referred to \cite{CPZ1,FGH}.
Given a vector $\x\in \mathbb{C}^{n}$, $\A \x^{m-1} \in \mathbb{C}^n$, which is defined as follows:
\begin{align*}
(\A \x^{m-1})_i & =\sum_{i_{2},\ldots,i_{m}\in [n]}a_{ii_{2}\ldots i_{m}}x_{i_{2}}\cdots x_{i_m}, i \in [n].
\end{align*}
A tensor $\mathcal{I}=(i_{i_1i_2\ldots i_m})$ of order $m$ and dimension $n$ is called an \emph{identity tensor}, if $i_{i_{1}i_2 \ldots i_{m}}=1$ for
   $i_{1}=i_2=\cdots=i_{m} \in [n]$ and $i_{i_{1}i_2 \ldots i_{m}}=0$ otherwise.

\begin{defi}[\cite{Lim,Qi}] Let $\A$ be an $m$-th order $n$-dimensional tensor.
For some $\lambda \in \mathbb{C}$, if the polynomial system $(\lambda \mathcal{I}-\A)x^{m-1}=0$,
or equivalently $\A x^{m-1}=\lambda x^{[m-1]}$, has a solution $x\in \mathbb{C}^{n}\backslash \{0\}$,
then $\lambda $ is called an \emph{eigenvalue} of $\A$ and $x$ is an \emph{eigenvector} of $\A$ associated with $\lambda$,
where $x^{[m-1]}:=(x_1^{m-1}, x_2^{m-1},\ldots,x_n^{m-1})$.
\end{defi}

The \emph{characteristic polynomial} $\varphi_\A(\la)$ of $\A$ is defined as the resultant of the polynomials $(\la \I-\A)\x^{m-1}$ (see \cite{Qi,CPZ2,Ha}).
It is known that $\la$ is an eigenvalue of $\A$ if and only if it is a root of $\varphi_\A(\la)$.
The \emph{spectrum} of $\A$, denoted by $\Spec(\A)$, is the multi-set of the roots of $\varphi_\A(\la)$.
The largest modulus of the elements in $\Spec(\A)$ is called the \emph{spectral radius} of $\A$, denoted by $\rho(\A)$.
Let $\mathbb{P}^{n-1}$ be the complex projective spaces of dimension $n-1$, and let $\la$ be an eigenvalue of $\A$.
Consider the projective variety
$$\PV_\la=\PV_\la(\A)=\{\x \in \mathbb{P}^{n-1}: \A\x^{m-1}=\la \x^{[m-1]}\}.$$
which is called the \emph{projective eigenvariety} of $\A$ associated with $\la$ \cite{FBH}.
In this paper the \emph{number of eigenvectors} of $\A$ is considered in $\PV_\la(\A)$, i.e.
the eigenvectors differing by a scalar is counted once as the same eigenvector.

Let $G=(V,E)$ be a hypergraph.
$G$ is called \emph{nontrivial} if it contains more than one vertex.
A {\it walk} $W$ of length $t$ in $G$ is a sequence of alternate vertices and edges: $v_{0}e_{1}v_{1}e_{2}\ldots e_{t}v_{t}$,
    where $v_{i} \ne v_{i+1}$ and $\{v_{i},v_{i+1}\}\subseteq e_{i}$ for $i=0,1,\ldots,t-1$.
If $v_0=v_l$, then $W$ is called a {\it circuit}.
A circuit of $G$ is called a {\it cycle} if no vertices or edges are repeated except $v_0=v_l$.
The hypergraph $G$ is said to be {\it connected} if every two vertices are connected by a walk;
 and is called a {\it hypertree} if $G$ is connected and acyclic.

Let $G_1$, $G_2$ be two vertex-disjoint connected nontrivial hypergraphs, and let $v_1\in V(G_1), v_2\in V(G_2)$.
The \emph{coalescence} of $G_1$, $G_2$ with respect to $v_1, v_2$,
   denoted by $G:=G_1(v_1)\odot G_2(v_2)$, is obtained from $G_1$, $G_2$ by identifying $v_1$ with $v_2$ and forming a new vertex $u$,
which is also written as $G_1(u)\odot G_2(u)$.
In this case, $u$ is called a \emph{cut vertex} of $G$, and $G_1, G_2$ are called a \emph{branches} of $G$.
A \emph{block} of $G$ is the (edge) maximal connected sub-hypergraph of $G$ without cut vertices.
A \emph{pendent block} of $G$ is a block of $G$ sharing exactly one vertex with other blocks.
Surely, if $G$ contains cut vertices, it must contain pendent blocks.

\begin{defi}\cite{Im, Gri, CD}
Let $G$ and $H$ be two $m$-uniform hypergraphs.
 The \emph{Cartesian product} of $G$ and $H$, denoted by $G\Box H$, has vertex set $V(G\Box H)=V(G)\times V(H)$, where $\{(i_1,j_1),\cdots,(i_m,j_m)\}\in E(G\Box H)$ if and only if one of the following two conditions holds:
(1) $i_1=\cdots=i_m$ and $\{j_1,\cdots,j_m\}\in E(H)$, (2) $j_1=\cdots=j_m$ and $\{i_1,\cdots,i_m\}\in E(G)$.
\end{defi}

Let $G$ be an $m$-uniform hypergraph on $n$ vertices $v_1,v_2,\ldots,v_n$.
The {\it adjacency tensor} of $G$ is defined as $\mathcal{A}(G)=(a_{i_{1}i_{2}\ldots i_{k}})$, an order $m$ dimensional $n$ tensor, where
\[a_{i_{1}i_{2}\ldots i_{m}}=\left\{
 \begin{array}{ll}
\frac{1}{(m-1)!}, &  \mbox{if~} \{v_{i_{1}},v_{i_{2}},\ldots,v_{i_{m}}\} \in E(G);\\
  0, & \mbox{otherwise}.
  \end{array}\right.
\]
Observe that $\A(G)$ is nonnegative and symmetric, and it is weakly irreducible if and only if $G$ is connected \cite{PZ,YY3}.
In this paper the \emph{spectrum}, \emph{eigenvalues} and \emph{spectral radius} of $G$ are referring to its adjacency tensor $\A(G)$.
The spectral radius of $G$ is denoted by $\rho(G)$.

It is proved in \cite{Shao} that the adjacency tensor of $G \Box H$ is
\begin{equation}\label{dir-ten}
\A(G\Box H)=\A(G) \otimes \I_{V(H)} + \I_{V(G)} \otimes \A(H).
\end{equation}
The hypergraph $G \Box H$ is connected if and only if $G$ and $H$ are both connected \cite{Im,Gri}.

\subsection{Stabilizing index}

Let $\A$ be a symmetric tensor of order $m$ and dimension $n$.
Set $$E(\A)=\{(i_1, i_2, \cdots, i_m)\in [n]^m: a_{i_1i_2\cdots i_m}\neq 0, 1\le i_1\le \cdots \le i_m \le n\}.$$
Define
\[\gamma_{e,j}=|\{k: i_k=j, e=(i_1, i_2, \cdots, i_m) \in E(\A), k \in [m]\}|\]
and obtain an $|E(\A)|\times n$ matrix $\In(\A)=(\gamma_{e,j})$, called the \emph{incidence matrix} of $\A$ \cite{FBH}.
The \emph{incidence matrix} of $G$, denoted by $\In(G)=(\gamma_{e,v})$, coincides with that of $\A(G)$, that is $\gamma_{e,v}=1$ if $v \in e$, and $\gamma_{e,v}=0$ otherwise.

For a matrix $B \in \Z_m^{k \times n}$,
there exist invertible matrices $P \in \Z_m^{k \times k}$ and $Q \in \Z_m^{n \times n}$ such that
\begin{equation} \label{smith}
PBQ=\left(
\begin{array}{ccccccc}
d_1 & 0 & 0 &  & \cdots & & 0\\
0 & d_2 & 0 &  & \cdots & &0\\
0 & 0 & \ddots &  &  & & 0\\
\vdots &  &  & d_r &  & & \vdots\\
 & & & & 0 & & \\
  & & & &  & \ddots & \\
0 &  &  & \cdots &  & &0
\end{array}
\right),
\end{equation}
where $r \ge 0$, $ 1 \le d_i \le m-1$, $d_i | d_{i+1}$ for $i \in [r-1]$, and $d_i |m$ for all $i \in [r]$.
The matrix in (\ref{smith}) is called the {\it Smith normal form} of $B$ over $\Z_m$,
where $d_1, \ldots, d_r$ are the \emph{invariant divisors} of $B$ over $\Z_m$.
We call two matrices $B, C$ are \emph{equivalent} over $\Z_m$ if $B=PCQ$ for some invertible matrices $P, Q$ over $\Z_m$.

Let $\A$ be a nonnegative weakly irreducible tensor of order $m$ and dimension $n$.
By Lemma 2.5 of \cite{FBH}, there is a bijection between $\PV_{\rho(\A)}$ and $\Dg^{(0)}(\A)$, and hence $|\PV_{\rho(\A)}|=s(\A)$,
namely $s(\A)$ is the number of eigenvectors of $\A$ associated with $\rho(\A)$.
By assigning a quasi-Hadamard product $\circ$ in $\PV_{\rho(\A)}$,
by Lemma 3.1 of \cite{FBH}, $(\PV_{\rho(\A)},\circ)$ is an abelian group isomorphism to $(\Dg^{(0)}(\A), \cdot)$.
Assume $\A$ is further symmetric.
By Lemma 2.5 of \cite{FBH}, $D^m=I$ for each $D \in \Dg^{(0)}(\A)$.
Then $(\PV_{\rho(\A)}, \circ)$ and $\Dg^{(0)}(\A)$ both admit $\Z_m$-modules and are isomorphic to each other.
Define the following $\Z_m$-module:
$$\PS_0(\A)=\{\x \in \Z_m^n: \In(\A)\x=\mathbf{0} \hbox{~over~} \Z_m, x_1=0\}.$$

\begin{thm}[\cite{FBH}, Lemma 3.3, Theorem 3.4, Theorem 3.6] \label{stru}
Let $\A$ be a symmetric weakly irreducible nonnegative tensor of order $m$ and dimension $n$.
Suppose that the incidence matrix $\In(\A)$ has a Smith normal form over $\Z_m$ as in (\ref{smith}).
Then $1 \le r \le n-1$, and as $\Z_m$-modules,
$$\PV_{\rho(\A)} \cong \Dg^{(0)}(\A) \cong \PS_0(\A) \cong \oplus_{i, d_i \ne 1} \Z_{d_i} \oplus (n-1-r)\Z_m.$$
\end{thm}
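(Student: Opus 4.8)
The plan is to verify the displayed chain one link at a time. The first isomorphism $\PV_{\rho(\A)}\cong\Dg^{(0)}(\A)$ is already in hand from the discussion preceding the statement (Lemmas 2.5 and 3.1 of \cite{FBH}, where $D^m=I$ for every $D\in\Dg^{(0)}(\A)$ turns both objects into $\Z_m$-modules), so the remaining tasks are to identify $\Dg^{(0)}(\A)$ with $\PS_0(\A)$ and then to read off $\PS_0(\A)$ from the Smith normal form (\ref{smith}).

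To get $\Dg^{(0)}(\A)\cong\PS_0(\A)$ I would write down an explicit map. Given $D\in\Dg^{(0)}(\A)$, the conditions $D^m=I$ and $d_{11}=1$ force $d_{jj}=e^{2\pi\i x_j/m}$ for a unique $\x\in\Z_m^n$ with $x_1=0$. Expanding the defining relation $D^{-(m-1)}\A D=\A$ entrywise through the product of \cite{Shao} gives, for each $(i_1,\dots,i_m)$ with $a_{i_1\cdots i_m}\neq0$, the scalar equation $d_{i_1i_1}^{-(m-1)}d_{i_2i_2}\cdots d_{i_mi_m}=1$; since $d_{jj}^m=1$ one may replace $d_{jj}^{-(m-1)}$ by $d_{jj}$, so it becomes $d_{i_1i_1}d_{i_2i_2}\cdots d_{i_mi_m}=1$, i.e. $x_{i_1}+\cdots+x_{i_m}\equiv0\pmod m$. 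Because $\A$ is symmetric these congruences are parametrized by the multisets $e\in E(\A)$, and in terms of the multiplicities $\gamma_{e,j}$ they are exactly the linear system $\In(\A)\x\equiv\mathbf{0}\pmod m$. Hence $D\mapsto\x$ is a bijection of $\Dg^{(0)}(\A)$ onto $\PS_0(\A)$; it is plainly additive (multiplying diagonal matrices adds exponent vectors), so it is a $\Z_m$-module isomorphism.

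It then remains to compute $\PS_0(\A)$. Since each row of $\In(\A)$ sums to $m$, we have $\In(\A)\mathbf{1}\equiv\mathbf{0}\pmod m$, so $\mathbf{1}$ lies in $\PS(\A):=\{\x\in\Z_m^n:\In(\A)\x=\mathbf{0}\}$ and generates a copy of $\Z_m$. For $\x\in\PS(\A)$ the decomposition $\x=(\x-x_1\mathbf{1})+x_1\mathbf{1}$, in which the first term again lies in $\PS(\A)$ and has vanishing first coordinate, together with $\PS_0(\A)\cap\langle\mathbf{1}\rangle=0$, shows $\PS(\A)=\PS_0(\A)\oplus\langle\mathbf{1}\rangle$. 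On the other hand, putting $P\In(\A)Q=\diag(d_1,\dots,d_r,0,\dots)$ and substituting $\y=Q^{-1}\x$, the relation $\In(\A)\x=\mathbf{0}$ turns into $d_iy_i=0$ in $\Z_m$ for $i\le r$ with $y_{r+1},\dots,y_n$ free; as $d_i\mid m$, the $i$-th coordinate ranges over a cyclic group of order $d_i$, so $\PS(\A)\cong\bigoplus_{i=1}^{r}\Z_{d_i}\oplus(n-r)\Z_m\cong\bigoplus_{i:\,d_i\neq1}\Z_{d_i}\oplus(n-r)\Z_m$. Comparing this with $\PS(\A)\cong\PS_0(\A)\oplus\Z_m$ and cancelling the finite summand $\Z_m$ (cancellation holds for finitely generated abelian groups) gives $\PS_0(\A)\cong\bigoplus_{i:\,d_i\neq1}\Z_{d_i}\oplus(n-1-r)\Z_m$.

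For the bounds $1\le r\le n-1$: if $r=0$ then $\In(\A)\equiv O\pmod m$, so every $\gamma_{e,j}\in\{0,m\}$, and since a row sums to $m$ each edge of $\A$ would be supported on one index, making $\A$ diagonal, which is impossible for a weakly irreducible tensor with $n\ge2$; thus $r\ge1$. For the upper bound, $\bigoplus_{i=1}^{r}\Z_{d_i}$ has exponent $d_r\le m-1$ and hence no element of order $m$, while $\mathbf{1}\in\PS(\A)$ has order $m$; so the free part $(n-r)\Z_m$ of $\PS(\A)$ cannot be trivial, i.e. $r\le n-1$. The step I expect to require the most care is the entrywise analysis in the second paragraph — invoking the product of \cite{Shao} correctly, using $D^m=I$ to linearize the exponents, and checking that symmetry collapses the multiplicative relations into precisely the $\Z_m$-linear system of $\In(\A)$; once that is done, the rest is standard module theory over $\Z_m$.
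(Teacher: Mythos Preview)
Your argument is correct, but there is nothing in the present paper to compare it against: Theorem~\ref{stru} is stated here as a quotation of Lemma~3.3, Theorem~3.4 and Theorem~3.6 of \cite{FBH}, with no proof given beyond the short paragraph of setup that precedes it. That paragraph already records $\PV_{\rho(\A)}\cong\Dg^{(0)}(\A)$ as $\Z_m$-modules (via Lemmas~2.5 and 3.1 of \cite{FBH} and $D^m=I$), which is exactly your first step.

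Your remaining steps match how the paper itself uses these results elsewhere. The identification $\Dg^{(0)}(\A)\cong\PS_0(\A)$ via $d_{jj}=e^{2\pi\i x_j/m}$ is the intended reading of Lemma~3.3 of \cite{FBH}, and your entrywise analysis of $D^{-(m-1)}\A D=\A$ (replacing $d_{jj}^{-(m-1)}$ by $d_{jj}$ using $D^m=I$, then collapsing via symmetry to the rows of $\In(\A)$) is exactly the standard computation. Your treatment of $\PS_0(\A)$ as $\PS(\A)/\langle\mathbf{1}\rangle$ followed by the Smith-form count of $\PS(\A)$ is precisely what the paper invokes later, in the proof of Theorem~\ref{stab-carts}, where it writes $\PS\cong\S/(\Z_m\mathbf{1})$ and attributes this to Theorem~3.6 of \cite{FBH}. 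The bound $r\ge1$ from ``$\In(\A)\equiv0\pmod m$ forces $\A$ diagonal'' and $r\le n-1$ from the order-$m$ element $\mathbf{1}$ are both sound (implicitly assuming $n\ge2$, which is harmless). In short, your proposal reconstructs the cited proof faithfully; the only caveat is that the paper never claims to prove this theorem itself.
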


By applying Theorem \ref{stru} to the adjacency tensor, we have the following result.

\begin{cor}\label{vrhoG}
Let $G$ be a connected $m$-uniform hypergraph on $n$ vertices.
Suppose that the incidence matrix $\In(G)$ has a Smith normal form over $\Z_m$ as in (\ref{smith}).
Then $1 \le r \le n-1$, and as $\Z_m$-modules
\begin{equation}\label{struEQ}
\PV_{\rho(G)} \cong \oplus_{i, d_i \ne 1} \Z_{d_i} \oplus (n-1-r)\Z_m, ~~~|\PV_{\rho(G)}|=s(G)=m^{n-1-r}\Pi_{i=1}^r d_i.
\end{equation}
\end{cor}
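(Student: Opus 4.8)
The plan is to specialize Theorem~\ref{stru} to the adjacency tensor $\A(G)$. First I would verify its hypotheses: by the remarks in Section~2, $\A(G)$ is a symmetric nonnegative tensor of order $m$ and dimension $n$, and it is weakly irreducible precisely because $G$ is connected. Moreover, by the definition of the incidence matrix of a tensor, $\In(\A(G))$ coincides with the hypergraph incidence matrix $\In(G)$; hence the Smith normal form of $\In(G)$ over $\Z_m$ in (\ref{smith}) is exactly the one required by Theorem~\ref{stru}, with invariant divisors $d_1 \mid d_2 \mid \cdots \mid d_r$.

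Applying Theorem~\ref{stru} then yields $1 \le r \le n-1$ together with the chain of $\Z_m$-module isomorphisms
$$\PV_{\rho(G)} \cong \Dg^{(0)}(\A(G)) \cong \PS_0(\A(G)) \cong \oplus_{i,\, d_i \ne 1}\Z_{d_i} \oplus (n-1-r)\Z_m,$$
using $\rho(G)=\rho(\A(G))$. Restricting attention to the first and last terms gives the structural isomorphism claimed in (\ref{struEQ}).

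For the order formula I would simply count: the finite $\Z_m$-module on the right has cardinality $\big(\prod_{d_i \ne 1} d_i\big)\, m^{n-1-r}$. Since each invariant divisor equal to $1$ contributes a trivial factor, $\prod_{d_i \ne 1} d_i = \Pi_{i=1}^r d_i$, so $|\PV_{\rho(G)}| = m^{n-1-r}\,\Pi_{i=1}^r d_i$. Finally, $s(G)$ is by definition the cardinality of $\Dg^{(0)}(\A(G))$, which the isomorphism above (equivalently the bijection in \cite[Lemma~2.5]{FBH}) identifies with $|\PV_{\rho(G)}|$; hence $s(G)=|\PV_{\rho(G)}|=m^{n-1-r}\,\Pi_{i=1}^r d_i$.

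I do not expect a genuine obstacle here, since the corollary is a direct translation of Theorem~\ref{stru} into hypergraph language. The only points requiring care are the identification $\In(\A(G))=\In(G)$, the observation that connectedness of $G$ is exactly what supplies weak irreducibility of $\A(G)$, and the minor bookkeeping that the invariant divisors equal to $1$ disappear from the module but are harmlessly reinstated in the product $\Pi_{i=1}^r d_i$.
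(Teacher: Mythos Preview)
Your proposal is correct and follows exactly the approach indicated in the paper: the corollary is stated there as an immediate specialization of Theorem~\ref{stru} to $\A(G)$, and you have simply written out the verification of the hypotheses and the cardinality count in detail.
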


\subsection{Cyclic index}
We mainly introduce some knowledge on the cyclic index of symmetric tensors and uniform hypergraphs.

\begin{lem}\cite[Lemma 2.7, Lemma 3.2]{FHB} \label{1-cyc}
Let $\A$ be a tensor of order $m$.
If $\A$ is spectral $\ell$-symmetric, then $\ell | c(\A)$.
If $\A$ is further symmetric, then $\ell | m$, and hence $c(\A) |m$.
\end{lem}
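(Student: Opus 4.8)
I would prove the two divisibilities separately, since the first holds for any tensor while the second genuinely uses symmetry. For $\ell\mid c(\A)$, the plan is to study the set $L(\A)=\{k\ge 1:\Spec(\A)=e^{\i\frac{2\pi}{k}}\Spec(\A)\}$. Two closure properties are immediate: $L(\A)$ is closed under passing to divisors (a power of $e^{\i 2\pi/k}$ again fixes $\Spec(\A)$), and it is closed under least common multiples, because $e^{\i 2\pi/k_1}$ and $e^{\i 2\pi/k_2}$ together generate the cyclic group $\langle e^{\i 2\pi/\operatorname{lcm}(k_1,k_2)}\rangle\subseteq\mathbb C^\times$, which therefore also fixes $\Spec(\A)$. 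Since $\Spec(\A)$ is a finite multiset and, in the situations of interest, contains a nonzero element (as $\rho(\A)>0$), every $k\in L(\A)$ satisfies $k\le|\Spec(\A)|=n(m-1)^{n-1}$; hence $c(\A):=\max L(\A)$ exists, and lcm-closure together with maximality forces $k\mid c(\A)$ for each $k\in L(\A)$ — so $L(\A)$ is exactly the set of divisors of $c(\A)$. In particular spectral $\ell$-symmetry means $\ell\in L(\A)$, giving $\ell\mid c(\A)$, and also $\A$ is itself spectral $c(\A)$-symmetric, which I use next. (A one-line variant: by the recalled formula $c(\A)=\gcd\{d:\Tr_d(\A)\ne 0\}$ and $\Tr_d(\A)=\sum_{\mu\in\Spec(\A)}\mu^{d}$, spectral $\ell$-symmetry gives $\Tr_d(\A)=e^{\i\frac{2\pi d}{\ell}}\Tr_d(\A)$, so $\Tr_d(\A)=0$ whenever $\ell\nmid d$, hence $\ell\mid c(\A)$.)

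For $\ell\mid m$ under symmetry, by the first part it suffices to show $c(\A)\mid m$, and then $\ell\mid c(\A)\mid m$. Here I would feed in the description from \cite{FHB} of the spectral symmetry of a symmetric tensor by an $(m,\ell)$-coloring — equivalently, by a diagonal scaling: since $\A$ is spectral $c(\A)$-symmetric, there are a primitive $c(\A)$-th root of unity $\zeta$ and an invertible diagonal matrix $D=\diag(d_1,\ldots,d_n)$, normalized so that $D^m=I$, with $D^{-(m-1)}\A D=\zeta\A$ (in the sense of \cite{Shao}). Reading off any nonzero entry $a_{i_1i_2\cdots i_m}$ yields $d_{i_1}^{-(m-1)}d_{i_2}\cdots d_{i_m}=\zeta$; since $d_{i_1}^{m}=1$ forces $d_{i_1}^{-(m-1)}=d_{i_1}$, this becomes $\zeta=d_{i_1}d_{i_2}\cdots d_{i_m}$, whence $\zeta^{m}=\prod_{k=1}^{m}d_{i_k}^{m}=1$. (Without normalizing $D$ one gets the same conclusion: the relation reads $d_{i_1}d_{i_2}\cdots d_{i_m}=\zeta d_{i_1}^{\,m}$, it holds with any of $i_1,\ldots,i_m$ in the first slot because $\A$ is symmetric, and multiplying those $m$ relations gives $\big(\prod_k d_{i_k}\big)^{m}=\zeta^{m}\big(\prod_k d_{i_k}\big)^{m}$, so $\zeta^{m}=1$.) Thus $\zeta$, which has order $c(\A)$, is an $m$-th root of unity, i.e. $c(\A)\mid m$.

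The only calculations here are the two short ones above, and they are routine. The step I expect to be the crux is the imported fact from \cite{FHB}: that \emph{every} spectral symmetry of a symmetric tensor is realized by a diagonal scaling $D^{-(m-1)}\A D=\zeta\A$ (equivalently, by an $(m,\ell)$-coloring). The direction ``coloring $\Rightarrow$ symmetry'' is just the computation above read backwards, using that diagonally similar tensors are cospectral; the converse — ``symmetry $\Rightarrow$ coloring'' — is where the Perron–Frobenius theory of nonnegative weakly irreducible tensors enters, and it is also the reason the second assertion belongs to the setting of adjacency-type (weakly irreducible) symmetric tensors rather than being a formal consequence of index-symmetry alone. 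A minor loose end in the first part is the finiteness of $L(\A)$, which, as noted, is ensured here by $\rho(\A)>0$.
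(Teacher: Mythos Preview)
The paper does not give its own proof of this lemma; it is quoted from \cite{FHB} (Lemmas~2.7 and~3.2 there), so there is no in-paper argument to compare against.

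Your proposal is sound and tracks what those cited lemmas do. The first part, via divisor/lcm closure of $L(\A)$ and the finiteness bound $k\le |\Spec(\A)|$, is the standard argument. For the second part, your entry-wise computation $\zeta=\prod_k d_{i_k}\Rightarrow \zeta^m=1$ is correct, but note that the input you are using --- that spectral $\ell$-symmetry of a nonnegative weakly irreducible tensor is realized by a diagonal similarity $D^{-(m-1)}\A D=\zeta\A$ --- is essentially the content of Lemma~3.2 of \cite{FHB} itself. So you are correctly identifying and invoking the cited result rather than reproving it from scratch, and you rightly flag that the ``symmetry $\Rightarrow$ scaling'' direction genuinely needs Perron--Frobenius and is not a formal consequence of index-symmetry alone. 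One small caution worth making explicit: Definition~\ref{spe-ell-sym} of $(m,\ell)$-colorability in this paper already \emph{presupposes} $\ell\mid m$, so routing the argument through Theorem~\ref{ml-color} would be circular; your diagonal-scaling formulation avoids this, since $D^{-(m-1)}\A D=\zeta\A$ makes sense for arbitrary $\zeta$ and the entry-wise identity then forces $\zeta^m=1$.
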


\begin{defi}\label{spe-ell-sym}\cite{FHB}
Let $m \ge 2$ and $\ell \ge 2$ be integers such that $ \ell \mid  m$.
An $m$-th order $n$-dimensional tensor $\A$ is called \emph{$(m,\ell)$-colorable}
if there exists a map $\phi:[n] \to [m]$ such that if $a_{i_1\ldots i_m} \ne 0$, then
$ \phi(i_1)+\cdots+\phi(i_m) \equiv \frac{m}{\ell} \mod m.$
Such $\phi$ is called an $(m,\ell)$-coloring of $\A$.
\end{defi}

\begin{thm}\cite{FHB}\label{ml-color}
Let $\A$ be a symmetric weakly irreducible nonnegative tensor of order $m$.
Then $\A$ is spectral $\ell$-symmetric if and only if $\A$ is $(m,\ell)$-colorable.
\end{thm}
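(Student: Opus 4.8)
The plan is to prove both implications by means of a diagonal ``gauge transformation'' that converts an $(m,\ell)$-coloring into the spectral symmetry and back. For sufficiency, suppose $\A$ admits an $(m,\ell)$-coloring $\phi$; I would set $\omega=e^{\i\frac{2\pi}{m}}$ and $D=\diag(\omega^{\phi(1)},\dots,\omega^{\phi(n)})$, so $D^m=I$, and check entrywise on the support of $\A$ that $d_{i_1}^{-(m-1)}d_{i_2}\cdots d_{i_m}=\omega^{\phi(i_1)+\phi(i_2)+\cdots+\phi(i_m)}=\omega^{m/\ell}$ (using $\omega^m=1$, so that $\omega^{-(m-1)\phi(i_1)}=\omega^{\phi(i_1)}$). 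Hence $D^{-(m-1)}\A D=\omega^{m/\ell}\A=e^{\i\frac{2\pi}{\ell}}\A$. Since the characteristic polynomial is invariant under the similarity $\A\mapsto D^{-(m-1)}\A D$ (by \cite{Shao}) and $\Spec(t\A)=t\,\Spec(\A)$ for any scalar $t$, this gives $\Spec(\A)=\Spec(e^{\i\frac{2\pi}{\ell}}\A)=e^{\i\frac{2\pi}{\ell}}\Spec(\A)$, i.e.\ $\A$ is spectral $\ell$-symmetric.

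For necessity, assume $\A$ is spectral $\ell$-symmetric, write $\rho=\rho(\A)$, and let $\x_0>0$ be the Perron vector. Then $\mu:=e^{\i\frac{2\pi}{\ell}}\rho\in\Spec(\A)$ has modulus $\rho$, and I would run the classical Perron--Frobenius argument to extract a gauge matrix from a $\mu$-eigenvector. Choosing an eigenvector $\z$ for $\mu$ and applying the triangle inequality to $\A\z^{m-1}=\mu\z^{[m-1]}$ gives $\A|\z|^{m-1}\ge\rho\,|\z|^{[m-1]}$ componentwise; the Collatz--Wielandt characterization of $\rho$ together with weak irreducibility forces equality and $|\z|>0$, so by uniqueness of the positive eigenvector I may normalize $|\z|=\x_0$. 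Writing $z_i=x_{0,i}\zeta_i$ with $|\zeta_i|=1$ and substituting back, the equality case of the triangle inequality yields, for every nonzero entry $a_{i_1i_2\cdots i_m}$, the relation $\zeta_{i_2}\cdots\zeta_{i_m}=e^{\i\frac{2\pi}{\ell}}\zeta_{i_1}^{m-1}$. Exploiting the symmetry of $\A$ (permuting which index sits in the first slot) I would deduce $\zeta_i^m=\zeta_j^m$ whenever $i,j$ co-occur in a nonzero entry, hence $\zeta_i^m\equiv\kappa$ for a constant $\kappa$ of modulus $1$ by weak irreducibility; dividing every $\zeta_i$ by a fixed $m$-th root of $\kappa$ I may assume $\zeta_i=\omega^{\phi(i)}$ for some $\phi(i)$. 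Finally, since $\ell\mid m$ by Lemma \ref{1-cyc}, the relation above becomes $\zeta_{i_1}\zeta_{i_2}\cdots\zeta_{i_m}=e^{\i\frac{2\pi}{\ell}}=\omega^{m/\ell}$, that is $\phi(i_1)+\cdots+\phi(i_m)\equiv\frac{m}{\ell}\pmod m$, so $\phi$ is an $(m,\ell)$-coloring.

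The sufficiency direction is a short computation once $D$ is written down; the substance lies in necessity, at the point where the analytic fact ``$\mu$ is a peripheral eigenvalue'' is upgraded to a combinatorial coloring. I expect the main obstacle to be the Perron--Frobenius input that forces $|\z|$ (up to scaling) to coincide with the Perron vector of the weakly irreducible nonnegative tensor $\A$, followed by the careful argument bookkeeping in the equality case of the triangle inequality --- it is precisely the symmetry of $\A$ that makes $\zeta_i^m$ constant along edges of its associated digraph, and hence globally constant.
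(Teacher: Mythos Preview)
The paper does not prove Theorem~\ref{ml-color}; it is quoted verbatim from \cite{FHB} and used as a black box. So there is no in-paper proof to compare against. That said, your argument is correct and is precisely the standard route one expects (and, to the best of my knowledge, the one taken in \cite{FHB}): the sufficiency direction is the gauge computation $D^{-(m-1)}\A D=e^{\i\frac{2\pi}{\ell}}\A$ with $D=\diag(\omega^{\phi(i)})$, combined with the invariance of the characteristic polynomial under diagonal similarity; the necessity direction is the Perron--Frobenius equality-in-triangle-inequality argument extracting unimodular phases $\zeta_i$ from a peripheral eigenvector, and then using symmetry plus weak irreducibility to force $\zeta_i^m$ constant, whence $\zeta_i=\omega^{\phi(i)}$ after normalization.

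Two minor remarks. First, the step ``$\Spec(\A)=\Spec(D^{-(m-1)}\A D)$'' is indeed a consequence of the fact that the resultant defining $\varphi_\A$ is invariant under diagonal similarity; this is well known but is perhaps more naturally attributed to the resultant machinery than to \cite{Shao} directly. Second, your appeal to Lemma~\ref{1-cyc} for $\ell\mid m$ in the necessity direction is fine, but note that it also drops out of your own argument: once each $\zeta_i$ is an $m$-th root of unity, the relation $\zeta_{i_1}\cdots\zeta_{i_m}=e^{\i\frac{2\pi}{\ell}}$ forces $e^{\i\frac{2\pi}{\ell}}$ to be an $m$-th root of unity.
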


A uniform hypergraph $G$ is called \emph{spectral $\ell$-symmetric} if $\A(G)$ is spectral $\ell$-symmetric.

\begin{defi}\label{spe-ell-sym-graph}
Let $m \ge 2$ and $\ell \ge 2$ be integers such that $ \ell \mid  m$.
An $m$-uniform hypergraph $G$ on $n$ vertices is called \emph{$(m,\ell)$-colorable}
if there exists a map $\phi: V(G) \to [m]$ such that if $\{v_{i_1},\ldots, v_{i_1}\} \in E(G)$, then
\begin{equation}\label{gen-col} \phi(v_{i_1})+\cdots+\phi(v_{i_1}) \equiv \frac{m}{\ell} \mod m.\end{equation}
Such $\phi$ is called an \emph{$(m,\ell)$-coloring} of $G$.
\end{defi}

\begin{cor}\cite{FHB}\label{ml-color-G}
Let $G$ be a connected $m$-uniform hypergraph.
Then $G$ is spectral $\ell$-symmetric if and only if $G$ is $(m,\ell)$-colorable.
\end{cor}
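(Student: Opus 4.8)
The plan is to obtain this corollary by specializing Theorem~\ref{ml-color} to $\A=\A(G)$, so that almost all the work is in checking that the hypotheses hold and that the two ``$(m,\ell)$-colorable'' conventions agree. First I would recall that $\A(G)$ is symmetric and nonnegative straight from its definition, and that it is weakly irreducible exactly because $G$ is connected. Hence Theorem~\ref{ml-color} applies and tells us that $\A(G)$ is spectral $\ell$-symmetric if and only if $\A(G)$ is $(m,\ell)$-colorable in the sense of Definition~\ref{spe-ell-sym}.

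Second, I would unwind the definitions on both sides. The spectral side is a tautology: $G$ is declared spectral $\ell$-symmetric precisely when $\A(G)$ is. For the coloring side I would verify that Definition~\ref{spe-ell-sym-graph} applied to $G$ is literally the same condition as Definition~\ref{spe-ell-sym} applied to $\A(G)$. Under the identification $V(G)=\{v_1,\dots,v_n\}\leftrightarrow[n]$, a coloring $\phi\colon V(G)\to[m]$ is the same datum as a map $[n]\to[m]$. The point is that a nonzero entry $a_{i_1\cdots i_m}$ of $\A(G)$ occurs exactly when $(i_1,\dots,i_m)$ is a permutation of the vertex-index set of an edge $\{v_{i_1},\dots,v_{i_m}\}\in E(G)$ (edges being $m$-element sets, so the $i_k$ are pairwise distinct), while the residue $\phi(v_{i_1})+\cdots+\phi(v_{i_m})\bmod m$ is invariant under permuting the indices. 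Thus the tensor-level requirement ``$a_{i_1\cdots i_m}\ne0\Rightarrow \phi(v_{i_1})+\cdots+\phi(v_{i_m})\equiv \tfrac m\ell \bmod m$'' ranges over exactly the same family of constraints as requirement (\ref{gen-col}) ranging over $E(G)$.

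Putting these together yields the chain ``$G$ spectral $\ell$-symmetric $\iff$ $\A(G)$ spectral $\ell$-symmetric $\iff$ $\A(G)$ is $(m,\ell)$-colorable $\iff$ $G$ is $(m,\ell)$-colorable'', which is the claim. I do not expect a genuine obstacle here: the only step needing a sentence of care is the translation between tensor entries and edges in the previous paragraph, which rests on the symmetry of $\A(G)$ and on each edge having $m$ distinct vertices. I would also note in passing that both definitions presuppose $\ell\mid m$, so — consistently with Lemma~\ref{1-cyc} — the equivalence is only asserted for divisors $\ell$ of $m$.
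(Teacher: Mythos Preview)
Your proposal is correct and is exactly the intended argument: the paper states Corollary~\ref{ml-color-G} as a direct specialization of Theorem~\ref{ml-color} to $\A=\A(G)$, relying on $\A(G)$ being nonnegative, symmetric, and weakly irreducible for connected $G$, together with the evident identification between Definitions~\ref{spe-ell-sym} and~\ref{spe-ell-sym-graph}. Your added care in matching the two coloring conventions is appropriate and the paper offers no further detail beyond this.
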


Now Eq. (\ref{gen-col}) is equivalent to
\[
\In(G) \phi =\frac{m}{\ell} \mathbf{1} \hbox{~over~} \Z_m,
\]
where $\phi=(\phi(v_1),\ldots,\phi(v_n))^\top$ is considered as a column vector.
So, Corollary \ref{ml-color-G} can be rewritten as follows.

\begin{cor} \label{sym-Zm}
Let $G$ be a connected $m$-uniform hypergraph.
Then $G$ is spectral $\ell$-symmetric if and only if the equation
\begin{equation}\label{ell-Zm}
\In(G) \x =\frac{m}{\ell} \mathbf{1} \hbox{\rm~over~} \Z_m
\end{equation} has a solution.
\end{cor}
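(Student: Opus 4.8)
The plan is to derive the corollary directly from Corollary~\ref{ml-color-G} by rewriting the combinatorial coloring condition \eqref{gen-col} as the single linear system \eqref{ell-Zm} over $\Z_m$. First I would fix the identification used implicitly in the paragraph preceding the statement: given a map $\phi\colon V(G)\to[m]$, regard it as the column vector $\phi=(\phi(v_1),\ldots,\phi(v_n))^\top$. By the definition of $\In(G)$, its $e$-th row is the indicator vector of the edge $e$ (each vertex of $e$ contributes exactly a $1$, since $G$ is $m$-uniform and its edges are sets), so for $e=\{v_{i_1},\ldots,v_{i_m}\}\in E(G)$ the $e$-th entry of $\In(G)\phi$ equals $\sum_{v\in e}\phi(v)=\phi(v_{i_1})+\cdots+\phi(v_{i_m})$. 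Reading everything modulo $m$, the system $\In(G)\phi\equiv\frac{m}{\ell}\mathbf 1\pmod m$ therefore holds if and only if \eqref{gen-col} holds for every edge of $G$, i.e.\ if and only if $\phi$ is an $(m,\ell)$-coloring of $G$.

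Next I would observe that the assignment $\phi\mapsto(\phi(v_1),\ldots,\phi(v_n))^\top$ is a bijection between maps $V(G)\to[m]$ and vectors in $\Z_m^n$: since $m\equiv0\pmod m$, the residues represented by $[m]=\{1,\ldots,m\}$ and by $\{0,1,\ldots,m-1\}$ coincide, so any solution $\x\in\Z_m^n$ of \eqref{ell-Zm} lifts to an $(m,\ell)$-coloring of $G$ and, conversely, every $(m,\ell)$-coloring yields a solution of \eqref{ell-Zm}. Hence \eqref{ell-Zm} is solvable over $\Z_m$ if and only if $G$ is $(m,\ell)$-colorable, which by Corollary~\ref{ml-color-G} is equivalent to $G$ being spectral $\ell$-symmetric; this completes the argument. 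There is essentially no obstacle: the entire content is the translation of the edge-sum condition into the row-sum action of $\In(G)$ together with the harmless identification of the codomain $[m]$ with $\Z_m$, so the only care needed is to state that identification explicitly.
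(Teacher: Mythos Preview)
Your proposal is correct and follows exactly the paper's approach: the paper does not give a separate proof but simply notes, just before the corollary, that Eq.~\eqref{gen-col} is equivalent to $\In(G)\phi=\frac{m}{\ell}\mathbf{1}$ over $\Z_m$ (viewing $\phi$ as a column vector), so Corollary~\ref{ml-color-G} can be rewritten as Corollary~\ref{sym-Zm}. Your write-up spells out the same translation in slightly more detail, including the identification of $[m]$ with $\Z_m$, but nothing beyond that is needed.
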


So the cyclic index $c(G)$ is the maximum divisor $\ell$ of $m$ such that $G$ is $(m,\ell)$-colorable.

\section{Stabilizing index of hypergraphs}
We first discuss the stabilizing index of the coalescence of connected uniform hypergraphs.

\begin{lem}\label{cyc-2bran}
Let $G = G_1(u) \odot G_2(u)$, where $G_1, G_2$ are both nontrivial connected $m$-uniform hypergraphs.
Then $ s(G) =s(G_1) \cdot s(G_2)$.
\end{lem}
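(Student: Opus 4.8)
The plan is to translate the claim into a count of solutions of the incidence equation over $\Z_m$ and then to exploit the block structure that coalescence forces on the incidence matrix. Since $G_1,G_2$ are connected and nontrivial, $G=G_1(u)\odot G_2(u)$ is connected, so $\A(G),\A(G_1),\A(G_2)$ are all symmetric weakly irreducible nonnegative and Corollary \ref{vrhoG} (equivalently Theorem \ref{stru}) applies to each. For a connected $m$-uniform hypergraph $H$ set $\PS(\A(H))=\{\x\in\Z_m^{V(H)}:\In(H)\x=\mathbf{0}\text{ over }\Z_m\}$. Each row of $\In(H)$ has exactly $m$ ones, so $\In(H)\mathbf{1}=\mathbf{0}$ and $\mathbf{1}\in\PS(\A(H))$. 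Since $\mathbf{1}$ has additive order $m$ and any $\x\in\PS(\A(H))$ can be written $\x=(\x-x_{1}\mathbf{1})+x_{1}\mathbf{1}$ with $\x-x_1\mathbf{1}\in\PS_0(\A(H))$, we obtain $\PS(\A(H))=\PS_0(\A(H))\oplus\langle\mathbf{1}\rangle$, whence
\[
|\PS(\A(H))|=m\cdot|\PS_0(\A(H))|=m\cdot s(H).
\]

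Next I would record the block decomposition of $\In(G)$. Because $G_1$ and $G_2$ are vertex-disjoint and coalescence identifies only the single vertex $u$, no edge of $G_1$ can equal an edge of $G_2$ (their vertices outside $u$ lie in disjoint sets and $m-1\ge1$), so $E(G)=E(G_1)\sqcup E(G_2)$ and $V(G)=\{u\}\sqcup(V(G_1)\setminus\{u\})\sqcup(V(G_2)\setminus\{u\})$. Writing $\In(G_i)=(b_i\mid A_i)$ with $b_i$ the column indexed by $u$, and ordering the rows of $\In(G)$ as (edges of $G_1$, then edges of $G_2$) and the columns as ($u$, then $V(G_1)\setminus\{u\}$, then $V(G_2)\setminus\{u\}$), one gets
\[
\In(G)=\begin{pmatrix} b_1 & A_1 & O\\ b_2 & O & A_2\end{pmatrix}.
\]
Hence a vector $\x\in\Z_m^{V(G)}$ lies in $\PS(\A(G))$ if and only if its restrictions $\x^{(1)}\in\Z_m^{V(G_1)}$ and $\x^{(2)}\in\Z_m^{V(G_2)}$ satisfy $\x^{(i)}\in\PS(\A(G_i))$ for $i=1,2$ and take the same value in the $u$-coordinate; that is, $\PS(\A(G))$ is the fibre product of $\PS(\A(G_1))$ and $\PS(\A(G_2))$ over $\Z_m$ along the two ``evaluate at $u$'' homomorphisms, each of which is surjective because $\mathbf{1}_{V(G_i)}\mapsto 1$.

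Finally it remains to count. If $f\colon A\to C$, $g\colon B\to C$ are homomorphisms of finite abelian groups with $f$ surjective, then all fibres of $f$ have size $|A|/|C|$, so $|A\times_C B|=\sum_{c\in C}|f^{-1}(c)|\,|g^{-1}(c)|=(|A|/|C|)\,|B|$. With $C=\Z_m$ this gives $|\PS(\A(G))|=|\PS(\A(G_1))|\cdot|\PS(\A(G_2))|/m$, and substituting $|\PS(\A(H))|=m\cdot s(H)$ from the first step yields $m\cdot s(G)=(m\cdot s(G_1))(m\cdot s(G_2))/m$, i.e.\ $s(G)=s(G_1)\cdot s(G_2)$, as desired. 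The argument is short; the one point that must be verified honestly is the combinatorial input behind the block form --- that coalescence glues exactly one vertex and introduces no coincident edges, so that the two row-blocks of $\In(G)$ communicate only through the single shared column --- after which the rest is routine bookkeeping with $\Z_m$-modules.
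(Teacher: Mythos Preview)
Your proof is correct and complete. Both your argument and the paper's start from the same block form of $\In(G)$, but they diverge from there. The paper performs explicit column operations over $\Z_m$: adding the columns indexed by $V_1\setminus\{u\}$ (and then those indexed by $V_2\setminus\{u\}$) to the $u$-column kills it, because each row of an incidence matrix sums to $m\equiv 0$; this shows $\In(G)$ is equivalent to a block-diagonal matrix $\mathrm{diag}(B_1,\mathbf{0},B_2)$ with $(B_i,\mathbf{0})$ equivalent to $\In(G_i)$, so the invariant divisors of $\In(G)$ are the concatenation of those of $\In(G_1)$ and $\In(G_2)$, and Corollary~\ref{vrhoG} finishes. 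You instead work directly with the full solution modules $\PS(\A(\cdot))$, identify $\PS(\A(G))$ as the fibre product of $\PS(\A(G_1))$ and $\PS(\A(G_2))$ over $\Z_m$ along the surjective ``evaluate at $u$'' maps, and count. Your route is more structural and bypasses Smith normal forms; the paper's route, while more computational, yields the slightly finer statement that the invariant divisors of $\In(G)$ are exactly those of $\In(G_1)$ together with those of $\In(G_2)$, hence $\PV_{\rho(G)}\cong\PV_{\rho(G_1)}\oplus\PV_{\rho(G_2)}$ as $\Z_m$-modules --- a conclusion your argument could also recover in one more line by observing that the fibre product splits as $\PS_0(\A(G_1))\oplus\PS_0(\A(G_2))\oplus\langle\mathbf{1}\rangle$.
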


\begin{proof}
Suppose that $G_1=(V_1, E_1)$ and $G_2=(V_2,E_2)$, where $|V_1|=n_1$ and $|V_2|=n_2$.
Then $G$ has $n_1+n_2-1$ vertices.
Write the incident matrix $\In(G)$ of $G$ with rows and columns labeled as follows:
\begin{equation}\label{inc-G}
\In(G)=\bordermatrix{
    & {\small V_1\backslash \{u\}} & u & {\small V_2 \backslash \{u\}} \cr
{\small E_1} & B_1 & \u_1 & O \cr
{\small E_2} & O & \u_2 & B_2
},
\end{equation}
where
    $\In(G_1)=(B_1, \u_1)$ and $\In(G_2)=(\u_2, B_{G_2})$.

As the sum of each row of $\In(G_1)$ is $m =0 \mod \Z_m$, adding all columns of $\In(G)$ indexed by $V_1\backslash \{u\}$ to the column indexed by $u$,
$\In(G)$ is equivalent to $\In^{(1)}$ in Eq. (\ref{inc-G1}), and $\In(G_1)$ is equivalent to $(B_1, \mathbf{0})$.
Similarly, adding all columns of $\In^{(1)}$ indexed by $V_2\backslash \{u\}$ to the column indexed by $u$, $\In^{(1)}$ is equivalent to
$\In^{(2)}$ in Eq. (\ref{inc-G1}), and $\In(G_2)$ is equivalent to $(\mathbf{0},B_2)$.
\begin{equation}\label{inc-G1}
\In^{(1)}=\begin{pmatrix}
 B_1 & \mathbf{0} & O \\
 O & \u_2 & B_2
\end{pmatrix},~~~
\In^{(2)}=\begin{pmatrix}
 B_1 & \mathbf{0} & O \\
 O & \mathbf{0} & B_2
\end{pmatrix}.
\end{equation}

Assume that $\In(G_1)$ has invariant divisors $d_1, \ldots, d_s$, and $\In(G_2)$ has invariant divisors $\bar{d}_1, \ldots, \bar{d}_t$, where $1 \le s \le n_1-1$ and $1 \le t \le n_2-1$.
As $\In(G_1)$ is equivalent to $(B_1, \mathbf{0})$, $B_1$ has invariants $d_1, \ldots, d_s$.
Similarly, $B_2$ has invariants $\bar{d}_1, \ldots, \bar{d}_t$.
So, $\In^{(2)}$, and hence $\In(G)$ has invariant divisors $d_1, \ldots, d_s, \bar{d}_1, \ldots, \bar{d}_t$.
By Corollary \ref{vrhoG},
$$ s(G)=m^{(n_1+n_2-1)-1-(s+t)} \prod_{i \in [s]} d_i \prod_{j \in [t]} \bar{d}_j=s(G_1) \cdot s(G_2).$$
\end{proof}

\begin{thm}\label{cyc-sblock}
Let $G$ be a connected $m$-uniform hypergraph with cut vertices.
If $G$ has $s$ blocks $G_1, \ldots, G_s$, where $s \ge 2$, then
 $s(G)=\Pi_{i=1}^s s(G_i)$.
\end{thm}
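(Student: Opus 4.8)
The plan is to induct on the number $s$ of blocks of $G$. For the base case $s=2$, the two blocks $G_1,G_2$ can intersect in at most one vertex, and since $G$ is connected and has a cut vertex they must share exactly one vertex $u$, which is then the unique cut vertex; hence $G=G_1(u)\odot G_2(u)$ and Lemma \ref{cyc-2bran} gives $s(G)=s(G_1)s(G_2)$ immediately. Here one uses that every block of a connected $m$-uniform hypergraph on at least two vertices is nontrivial (a block must contain an edge and $m\ge 2$), so the hypotheses of Lemma \ref{cyc-2bran} are met.

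For the inductive step, suppose $s\ge 3$ and the result holds for hypergraphs with fewer than $s$ blocks. Since $G$ has cut vertices it has a pendent block, which we may label $G_s$; let $u$ be the unique vertex that $G_s$ shares with the other blocks, so that $V(G_s)\setminus\{u\}$ is private to $G_s$ and no edge outside $E(G_s)$ meets it. Let $G'$ be the sub-hypergraph with vertex set $\big(V(G)\setminus V(G_s)\big)\cup\{u\}$ and edge set $E(G)\setminus E(G_s)$. Then $G'$ is connected (deleting the private part of a pendent block cannot disconnect the remainder), $G'$ is $m$-uniform, and $G=G'(u)\odot G_s(u)$. Moreover the blocks of $G'$ are precisely $G_1,\dots,G_{s-1}$, since the block-cut structure of $G$ is unaffected away from the pendent block. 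As $G'$ has $s-1\ge 2$ blocks it is nontrivial and still has cut vertices, so the induction hypothesis gives $s(G')=\prod_{i=1}^{s-1}s(G_i)$. Applying Lemma \ref{cyc-2bran} to $G=G'(u)\odot G_s(u)$ (both factors nontrivial connected $m$-uniform hypergraphs) then yields $s(G)=s(G')\,s(G_s)=\prod_{i=1}^{s}s(G_i)$, completing the induction.

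An alternative, induction-free route would be to imitate the incidence-matrix computation in the proof of Lemma \ref{cyc-2bran} directly: write $\In(G)$ with row blocks $E(G_1),\dots,E(G_s)$, and for each pendent block add the columns indexed by its private vertices to the column of its attaching cut vertex, which zeroes out that cut-vertex column within the pendent block's rows; peeling blocks off one at a time shows $\In(G)$ is equivalent over $\Z_m$ to the block-diagonal matrix whose diagonal blocks are equivalent to the $\In(G_i)$, so the invariant divisors of $\In(G)$ are the concatenation of those of the $\In(G_i)$, and Corollary \ref{vrhoG} finishes the count. Either way, the spectral content is entirely carried by Lemma \ref{cyc-2bran} and Corollary \ref{vrhoG}; the only real work, and the step I expect to be the main (if routine) obstacle, is the combinatorial bookkeeping — confirming that a pendent block meets the rest of $G$ in exactly one vertex, that its removal leaves a connected hypergraph whose blocks are exactly the remaining $G_i$, and (in the matrix version) that the column operations associated with different pendent blocks do not interfere.
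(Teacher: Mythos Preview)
Your proof is correct and follows essentially the same approach as the paper: induct on the number of blocks, peel off a pendent block $G_s$ so that $G=G'(u)\odot G_s(u)$, and apply Lemma~\ref{cyc-2bran}. The paper's version is terser (it omits the verification that the factors are nontrivial connected $m$-uniform hypergraphs and that the remaining blocks of $G'$ are exactly $G_1,\dots,G_{s-1}$), while you spell out these combinatorial checks and also sketch the direct block-diagonal incidence-matrix alternative; but the underlying argument is the same.
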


\begin{proof}
Observer that $G$ contains a pendent block, say $G_1$.
Write $G=G_1 \odot H$, where $H$ has $s-1$ blocks.
By Lemma \ref{cyc-2bran}, $s(G)=s(G_1) \cdot s(H)$.
The result follows by induction on $s$.
\end{proof}

\begin{cor}\label{cyc-tree}
Let $T$ be an $m$-uniform hypertree with $s$ edges.
Then $s(T)=m^{s(m-2)}$.
\end{cor}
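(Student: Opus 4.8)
The plan is to reduce the claim to Theorem~\ref{cyc-sblock} together with the simplest possible base case, namely a single edge. First I would compute $s$ of a single $m$-uniform edge $K_m^{(m)}$, the hypergraph on $m$ vertices with one edge $e = \{v_1, \ldots, v_m\}$. Its incidence matrix $\In(K_m^{(m)})$ is the $1 \times m$ all-one row vector $\mathbf{1}^\top$ over $\Z_m$. Since $\gcd(1,1,\ldots,1) = 1$, the Smith normal form over $\Z_m$ is $(1, 0, \ldots, 0)$, so $r = 1$ and the single invariant divisor is $d_1 = 1$. By Corollary~\ref{vrhoG} with $n = m$ we get $s(K_m^{(m)}) = m^{n-1-r} \prod_{i=1}^r d_i = m^{m-1-1}\cdot 1 = m^{m-2}$.

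Next I would observe that in an $m$-uniform hypertree $T$ with $s$ edges, every block is a single edge: a hypertree is connected and acyclic, and any block containing two or more edges would contain a cycle (or at least two edges sharing a vertex in a way that forces a non-edge-trivial $2$-connected piece, which in the uniform setting produces a cycle). So $T$ has exactly $s$ blocks $G_1, \ldots, G_s$, each isomorphic to $K_m^{(m)}$. If $s = 1$ then $T$ is a single edge and $s(T) = m^{m-2} = m^{s(m-2)}$ directly. If $s \ge 2$, then $T$ has cut vertices and Theorem~\ref{cyc-sblock} applies, giving
\[
s(T) = \prod_{i=1}^s s(G_i) = \prod_{i=1}^s m^{m-2} = m^{s(m-2)}.
\]

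The only real point that needs care is the structural claim that the blocks of an $m$-uniform hypertree are precisely its edges; everything else is a direct substitution into results already proved. I would justify it by noting that a block with at least two edges is a maximal connected sub-hypergraph with no cut vertex, hence must contain two edges $e, e'$ and a walk between them avoiding any single vertex, which—being acyclic is violated—cannot exist in a hypertree; thus each block has exactly one edge, and since $T$ is connected with $s$ edges it decomposes into $s$ such blocks glued at cut vertices. Alternatively, one can avoid the block language entirely and argue directly by induction on $s$: a hypertree with $s \ge 2$ edges has a pendant edge $G_1$ (an edge meeting the rest of $T$ in a single vertex), write $T = G_1(u)\odot H$ with $H$ a hypertree on $s-1$ edges, apply Lemma~\ref{cyc-2bran} to get $s(T) = s(G_1)\cdot s(H) = m^{m-2}\cdot m^{(s-1)(m-2)} = m^{s(m-2)}$, with base case the single edge computed above. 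I expect the induction route to be the cleanest and would present that.
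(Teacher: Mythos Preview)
Your proposal is correct and follows essentially the same approach as the paper: both identify that the blocks of a hypertree are precisely its edges, use $s(e)=m^{m-2}$ for a single edge, and then apply Theorem~\ref{cyc-sblock}. The only minor difference is that the paper cites an external example for $s(e)=m^{m-2}$, whereas you derive it directly from Corollary~\ref{vrhoG} via the Smith normal form of the $1\times m$ all-ones row; your self-contained computation and the optional induction variant via Lemma~\ref{cyc-2bran} are both fine.
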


\begin{proof}
By the definition, each edge $e$ of $T$ is a block.
So $T$ has $s$ blocks.
For a hypergraph consisting of only one edge $e$, by Example 4.3 of \cite{FBH}, $s(e)=m^{m-2}$.
The result follows by Theorem \ref{cyc-sblock}.
\end{proof}

In Corollary \ref{cyc-tree}, when the number of edges (or vertices) of $T$ goes to infinity,
then $s(T)$, the number of eigenvectors of $\A(T)$ associated with $\rho(T)$, goes to infinity.
We now give an example to show the opposite situation.
Let $K_n^{[m]}$ be an $m$-uniform complete hypergraph on $n \ge m+1$ vertices.
 It was shown that $s(K_n^{[m]})=1$ by Example 4.4 of \cite{FHB}.
 If taking a specified vertex $u$ of  $K_n^{[m]}$, we construct the coalescence of $n$ copies of $K_n^{[m]}$ as follows:
 $ n \odot K_n^{[m]}:=K_n^{[m]}(u) \odot K_n^{[m]}(u) \odot \cdots \odot K_n^{[m]}(u)$.
 By Theorem \ref{cyc-sblock}, $s(n \odot K_n^{[m]})=1$.

Next we discuss the stabilizing index of the Cartesian product of two  connected uniform hypergraphs.
We need the notion of direct product (or Kronecker product) of two tensors of order $m$,
which generalizes the direct product of matrices (of order $m=2$).

\begin{defi}\cite{Shao}
Let $\A,\B$ be two tensors of order $m$  with dimension $n_1,n_2$, respectively.
The \emph{direct product} $\A \otimes \B$ is defined to be a tensor of order $m$ and dimension $n_1n_2$, whose entries are
\[ (\A \otimes \B)_{(i_1,j_1),\ldots, (i_m,j_m)}=a_{i_1 \ldots i_m}b_{j_1 \ldots j_m}, i_t \in [n_1], j_t \in [n_2], t \in [m].
\]
\end{defi}

\begin{lem}\label{inc-cart}
Let $G_1,G_2$ be two $m$-uniform hypergraphs.
Then under a labeling of the vertices and edges, the Cartesian product $G_1 \Box H$ has the incidence matrix
\[
\In({G_1\Box G_2})=\left(\begin{array}{c}
I_{V(G_1)}\otimes \In({G_2})\\
\In({G_1})\otimes I_{V(G_2)}\\
\end{array}\right).
\]
\end{lem}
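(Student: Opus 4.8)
The plan is to unwind the definitions on both sides and check that the two incidence matrices literally have the same entries, after a suitable identification of their rows and columns. First I would fix the vertex set $V(G_1 \Box G_2) = V(G_1) \times V(G_2)$ and observe that the edge set $E(G_1 \Box G_2)$ partitions naturally into two blocks according to the two cases in the definition of the Cartesian product: the edges of ``$H$-type'' (those of the form $\{(i,j_1),\dots,(i,j_m)\}$ for a fixed $i \in V(G_1)$ and $\{j_1,\dots,j_m\} \in E(G_2)$), and the edges of ``$G_1$-type'' (those of the form $\{(i_1,j),\dots,(i_m,j)\}$ for a fixed $j \in V(G_2)$ and $\{i_1,\dots,i_m\} \in E(G_1)$). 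I would label the rows of $\In(G_1 \Box G_2)$ so that the $H$-type edges come first, indexed by pairs $(i, e)$ with $i \in V(G_1)$, $e \in E(G_2)$, and the $G_1$-type edges come second, indexed by pairs $(f, j)$ with $f \in E(G_1)$, $j \in V(G_2)$. Columns are indexed by pairs $(v, w) \in V(G_1) \times V(G_2)$ in the obvious lexicographic order compatible with the Kronecker product.

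Next I would compute the entries. Since $G_1, G_2$ are $m$-uniform (all edges have $m$ distinct vertices), every incidence matrix here has $0$-$1$ entries, with a $1$ in position (edge, vertex) exactly when the vertex lies in the edge. For an $H$-type edge $(i,e)$ and a column $(v,w)$: the vertex $(v,w)$ lies in the edge $\{(i,j) : j \in e\}$ iff $v = i$ and $w \in e$. The first condition contributes a factor $(I_{V(G_1)})_{i,v}$ and the second a factor $\In(G_2)_{e,w}$, so the entry equals $(I_{V(G_1)})_{i,v} \cdot \In(G_2)_{e,w} = (I_{V(G_1)} \otimes \In(G_2))_{(i,e),(v,w)}$, which is exactly the top block. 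Symmetrically, for a $G_1$-type edge $(f,j)$ and a column $(v,w)$, the vertex $(v,w)$ lies in the edge iff $v \in f$ and $w = j$, giving the entry $\In(G_1)_{f,v} \cdot (I_{V(G_2)})_{j,w} = (\In(G_1) \otimes I_{V(G_2)})_{(f,j),(v,w)}$, the bottom block. This matches the claimed block form.

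There is essentially no hard analytic step; the only thing requiring a little care is bookkeeping — making sure the row/column orderings used implicitly in the Kronecker products $I_{V(G_1)} \otimes \In(G_2)$ and $\In(G_1) \otimes I_{V(G_2)}$ are mutually consistent (both indexing columns by $V(G_1) \times V(G_2)$ in the same order), and noting that the phrase ``under a labeling of the vertices and edges'' in the statement is precisely what lets us choose these orderings freely. I would also remark in passing that $G_1 \Box G_2$ is again $m$-uniform and that no two distinct $H$-type (or $G_1$-type) edges coincide, nor does an $H$-type edge ever equal a $G_1$-type edge when both $G_1$ and $G_2$ are nontrivial, so the rows listed really do biject with $E(G_1 \Box G_2)$ and the incidence matrix has exactly the stated shape. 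Note that this lemma is the exact combinatorial analogue, at the level of incidence matrices, of the tensor identity $\A(G_1 \Box G_2) = \A(G_1) \otimes \I_{V(G_2)} + \I_{V(G_1)} \otimes \A(G_2)$ recorded in~(\ref{dir-ten}).
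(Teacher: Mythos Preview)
Your proof is correct and follows essentially the same approach as the paper: partition $E(G_1\Box G_2)$ into the two edge types, then verify entrywise that each block of the incidence matrix equals the corresponding Kronecker product by writing the incidence condition as a product of a Kronecker delta and an incidence entry of $G_1$ or $G_2$. Your additional remarks about consistent orderings and the bijection between rows and edges are sound bookkeeping that the paper leaves implicit.
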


\begin{proof}
By the definition, we have a bipartition $\{E_1,E_2\}$ of the edge set $E(G_1 \Box G_2)$ as follows:
\begin{align}
E_1&:=\cup_{u \in V(G_1)} \{u\}\times E(G_2)=\cup_{u \in V(G_1)}\{ \{u\}\times f : f \in E(G_2)\}.\label{2nd-edge}\\
E_2&:=\cup_{v \in V(G_2)} E(G_1)\times \{v\}=\cup_{v \in V(G_2)}\{ e \times \{v\}: e \in E(G_1)\},\label{1st-edge}
\end{align}
For each edge $\{u\}\times f \in E_1$, and each vertex $(u',v) \in V(G_1) \times V(G_2)$,
$$ \In({G_1\Box G_2})_{\{u\}\times f, (u', v)}=\delta_{uu'}\cdot \In(G_2)_{fv}=(I_{V(G_1)}\otimes \In({G_2}))_{(u,f),(u',v)},$$
where $\delta_{uu'}$ is the Kronecker delta symbol, i.e. $\delta_{uu'}=1$ if $u=u'$,  and  $\delta_{uu'}=0$ otherwise.
For each edge $e \times \{v\} \in E_2$, and each vertex $(u,v') \in V(G_1) \times V(G_2)$,
$$ \In({G_1\Box G_2})_{e \times \{v\}, (u, v')}=\In(G_1)_{eu} \cdot \delta_{vv'}=(\In({G_1})\otimes I_{V(G_2)})_{(e,v),(u,v')}.$$
The result follows.
\end{proof}

\begin{thm}\label{stab-carts}
Let $G_1$ be a connected $m$-uniform hypergraph on $n_{G_1}$ vertices whose incidence matrix $\In(G_1)$ has invariants $d_1, \ldots d_{r_{G_1}}$,
and let $G_2$ be a connected $m$-uniform hypergraph on $n_{G_2}$ vertices whose incidence matrix $\In(G_2)$ has invariants $\bar{d}_1, \ldots \bar{d}_{r_{G_2}}$.
Then
\begin{align*}
s(G_1\Box G_2)&=m^{(n_{G_1}-r_{G_1})(n_{G_2}-r_{G_2})-1}\prod_{i\in [r_{G_1}],\atop j\in [r_{G_2}]}\lr{d_i,\bar{d}_j}\prod_{i\in [r_{G_1}]}d_i^{n_{G_2}-r_{G_2}}
\prod_{j\in [r_{G_2}]}\bar{d}_j^{n_{G_1}-r_{G_1}}.
\end{align*}
In particular, if $m$ is prime, then
$$s(G_1\Box G_2)=m^{(n_{G_1}-r_{G_1})(n_{G_2}-r_{G_2})-1},$$
where $r_{G_2},r_{G_2}$ are the ranks of $\In(G_1), \In(G_2)$ over the field $\Z_m$ respectively.
\end{thm}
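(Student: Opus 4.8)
The plan is to combine Lemma~\ref{inc-cart} with Corollary~\ref{vrhoG}, so the whole problem reduces to computing the invariant divisors over $\Z_m$ of the block matrix
\[
\In(G_1\Box G_2)=\begin{pmatrix} I_{V(G_1)}\otimes \In(G_2)\\ \In(G_1)\otimes I_{V(G_2)}\end{pmatrix}.
\]
First I would bring $\In(G_1)$ and $\In(G_2)$ to Smith normal form over $\Z_m$: write $P_1\In(G_1)Q_1=D_1$ and $P_2\In(G_2)Q_2=D_2$, where $D_1$ has diagonal entries $d_1,\dots,d_{r_{G_1}}$ (followed by zeros) and $D_2$ has diagonal entries $\bar d_1,\dots,\bar d_{r_{G_2}}$ (followed by zeros), as in~(\ref{smith}). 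Conjugating the stacked matrix by $\mathrm{diag}(P_1\otimes P_2,\, P_2\otimes P_1)$ on the left (after a suitable row permutation identifying the two tensor orders) and by $Q_1\otimes Q_2$ on the right, one reduces $\In(G_1\Box G_2)$ over $\Z_m$ to
\[
\begin{pmatrix} I\otimes D_2\\ D_1\otimes I\end{pmatrix},
\]
which is a \emph{diagonal} stacked matrix: each column $(k,l)$, with $k\in[n_{G_1}]$ and $l\in[n_{G_2}]$, has nonzero entries only in the two rows corresponding to it, namely the value $\bar d_l$ (with the convention $\bar d_l=0$ for $l>r_{G_2}$) and the value $d_k$ (with $d_k=0$ for $k>r_{G_1}$).

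Next, the key arithmetic observation is that a column of the form $(0,\dots,0,a,0,\dots,0,b,0,\dots)^\top$ with exactly two nonzero positions $a,b$ contributes an invariant divisor $\lr{a,b}=\gcd(a,b)$ to the Smith normal form (over $\Z$, or over $\Z_m$ after reducing), and these columns being supported on disjoint pairs of rows, the Smith form of the whole matrix is simply the multiset union of these single-column contributions. Hence the nonzero invariant divisors of $\In(G_1\Box G_2)$ over $\Z_m$ are precisely: $\lr{d_i,\bar d_j}$ for $i\in[r_{G_1}],\ j\in[r_{G_2}]$; together with $d_i$ for $i\in[r_{G_1}]$, each occurring $n_{G_2}-r_{G_2}$ times (coming from columns $(i,l)$ with $l>r_{G_2}$); together with $\bar d_j$ for $j\in[r_{G_2}]$, each occurring $n_{G_1}-r_{G_1}$ times (from columns $(k,j)$ with $k>r_{G_1}$). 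The columns $(k,l)$ with $k>r_{G_1}$ and $l>r_{G_2}$ are entirely zero and contribute nothing. One should double-check that the divisibility chain $d_i\mid d_{i+1}$ etc. can always be restored by permuting these diagonal entries, which is automatic since permuting diagonal entries of a diagonal matrix is an equivalence; and one should note that each $\lr{d_i,\bar d_j}$ still divides $m$ and lies in $[1,m-1]$ (or equals $1$), so it genuinely falls under the conventions of~(\ref{smith}). Then plugging into $s(G)=m^{\,n-1-r}\prod d_i$ from Corollary~\ref{vrhoG}, with $n=n_{G_1}n_{G_2}$ and $r$ the number of nonzero invariant divisors counted above, and simplifying the exponent of $m$, gives the stated formula; the ``in particular'' case is immediate since for prime $m$ every invariant divisor is $1$, so $s(G_1\Box G_2)=m^{(n_{G_1}-r_{G_1})(n_{G_2}-r_{G_2})-1}$ where $r_{G_i}$ is just the $\Z_m$-rank.

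The main obstacle I anticipate is the bookkeeping in the reduction to the diagonal stacked form: one must be careful that the left-multiplying block matrix is genuinely invertible over $\Z_m$ and that the row permutation matching the two copies of the tensor index set $V(G_1)\times V(G_2)$ is handled correctly, since $I\otimes \In(G_2)$ and $\In(G_1)\otimes I$ naturally index their columns in the same way but one wants to see the result as ``one diagonal entry per column from each block.'' A clean way to avoid confusion is to argue column-by-column after the reduction: fix the column indexed by $(k,l)$ and observe it is supported on the single row $(k,l)$ in the first block and the single row $(k,l)$ in the second block, with entries $\bar d_l$ and $d_k$ respectively; since distinct columns have disjoint row supports across \emph{both} blocks, the matrix is (up to permutation of rows and columns) a direct sum of $2\times 1$ blocks $\binom{\bar d_l}{d_k}$ and $1\times1$ zero blocks, whose Smith normal form is transparent. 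The remaining verification — that the exponent count $n_{G_1}n_{G_2}-1-r$ with $r=r_{G_1}r_{G_2}+r_{G_1}(n_{G_2}-r_{G_2})+r_{G_2}(n_{G_1}-r_{G_1})$ simplifies to $(n_{G_1}-r_{G_1})(n_{G_2}-r_{G_2})-1$ — is routine algebra.
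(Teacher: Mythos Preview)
Your approach is essentially the paper's: reduce $\In(G_1\Box G_2)$ via Smith normal forms of the factors to the stacked diagonal matrix $\begin{pmatrix} I\otimes\Lambda_{G_2}\\ \Lambda_{G_1}\otimes I\end{pmatrix}$, then analyse it column by column. The paper phrases the final step as computing the solution module $\bar\S=\{\x:\hat\In\x=0\}$ and using $s(G_1\Box G_2)=|\bar\S|/m$, while you phrase it as reading off diagonal entries and plugging into Corollary~\ref{vrhoG}; these amount to the same arithmetic.

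Two small points to tighten. First, your left multiplier $\diag(P_1\otimes P_2,\,P_2\otimes P_1)$ does not do the job; the correct block is $\diag(Q_1^{-1}\otimes P_2,\,P_1\otimes Q_2^{-1})$, since you need to undo the $Q_1$ (resp.\ $Q_2$) introduced on the right in the block where it hits $I$. Second, the list $\{\lr{d_i,\bar d_j}\}\cup\{d_i\}\cup\{\bar d_j\}$ you obtain are the diagonal entries of \emph{some} diagonal matrix equivalent to $\In(G_1\Box G_2)$ over $\Z_m$, but they need not be ``the invariant divisors'' in the sense of~(\ref{smith}): they do not in general form a divisibility chain (e.g.\ $m=6$, $d_i=2$, $\bar d_j=3$), and mere permutation will not fix this. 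What saves you is that for any diagonal matrix with entries dividing $m$, the quantity $m^{n-1-r}\prod(\text{nonzero entries})$ equals $|\ker|/m$, which is an equivalence invariant; so the formula from Corollary~\ref{vrhoG} is insensitive to whether you have the genuine Smith form or just a diagonal form. State it that way and the argument is clean.
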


\begin{proof}
There exist invertible matrices $P_1, Q_1$ and $P_2, Q_2$ over $\Z_m$ such that
\begin{equation}\label{sm}
P_1\In(G_1)Q_1=\Lambda_{G_1}, \; P_2\In(G_2)Q_2=\Lambda_{G_2},
\end{equation}
where $\Lambda_{G_1}, \Lambda_{G_2}$ are the Smith normal forms of $\In({G_1}), \In({G_2})$ respectively.
Now applying an invertible transformation to $\In({{G_1}\Box {G_2}})$, by Lemma \ref{inc-cart}, we have
\begin{align*}
\small \left(\begin{array}{cc}
Q^{-1}_1\otimes P_2 & O \\
O & P_1\otimes {Q^{-1}_2}
\end{array}\right)
\left(\begin{array}{c}
I_{V({G_1})}\otimes \In({G_2}) \\
\In({G_1})\otimes I_{V({G_2})}
\end{array}\right)
Q_1\otimes Q_2 =
\left(\begin{array}{c}
I_{V({G_1})}\otimes \Lambda_{G_2} \\
\Lambda_{G_1}\otimes I_{V({G_2})}
\end{array}\right)
:=\hat{\In}.
\end{align*}

Let $n:=n_{G_1} \cdot n_{G_2}$, and let
\[\S:=\{\x \in\Z_m^{n}: \In({{G_1}\Box {G_2}})\x=\mathbf{0} \hbox{~over~} \Z_m\},~~~
\bar{\S}:=\{\x \in \Z_m^{n}:\hat{\In}\x=\mathbf{0}  \hbox{~over~} \Z_m\}.\]
It is easily seen that $\S$ and $\bar{\S}$ are both $\Z_m$-modules,
and $\S \cong \bar{\S}$ as $\In({{G_1}\Box {G_2}})$  is equivalent to $\hat{\In}$.
So it suffices to consider the equation $\hat{\In}\x=\textbf{0}$ over $\mathbb{Z}_m$, which is equivalent to the following two equations:
 \begin{eqnarray}
(I_{V({G_1})}\otimes \Lambda_{G_2}) ~\x  =  \textbf{0}\hbox{~over~} \Z_m ,\label{1st-equ}\\
(\Lambda_{G_1}\otimes I_{V({G_2})})~\x  =  \textbf{0}\hbox{~over~} \Z_m.\label{2nd-equ}
\end{eqnarray}

Let $V(G_1)=\{1,2,\ldots,n_{G_1}\}$ and $V(G_2)=\{1', 2', \ldots, n'_{G_2}\}$.
By the equation (\ref{1st-equ}), for each $i=1,2,\ldots,n_{G_1}$,
\begin{equation}\label{H}
\Lambda_{G_2}\x_{i\cdot}=\textbf{0} \hbox{~over~} \mathbb{Z}_m,
\end{equation}
where $\x_{i\cdot}=(x_{i1'},\cdots,x_{in'_{{G_2}}})^\top$.
Noting that $\Lambda_{G_2}$ has invariant divisors $\bar{d}_1, \ldots \bar{d}_{r_{G_2}}$, so
$$
\bar{d}_jx_{ij'}=0, j=1,2,\ldots,r_{G_2},
$$
that is, for $i=1,2,\ldots,n_{G_1}$,
\begin{equation}\label{1st-equ-1}
x_{ij'}\in\mathbb{Z}_m (m/\bar{d}_j) \cong \mathbb{Z}_{\bar{d}_j}, j=1,2,\ldots,r_{G_2},
\end{equation}
\begin{equation}\label{1st-equ-2}
x_{ij'}\in\mathbb{Z}_m,  j=r_{{G_2}}+1,\ldots,n_{G_2}.
\end{equation}

By the equation (\ref{2nd-equ}), for each $j=1,2,\cdots,n_{G_2}$,
\begin{equation}\label{G}
\Lambda_{G_1}\x_{\cdot j'}=\textbf{0} \hbox{~over~} \Z_m,
\end{equation}
where $\x_{\cdot j'}=(x_{1j'},\cdots,x_{n_{G_1} j'})^\top$.
Noting that $\Lambda_{G_1}$ has invariant divisors $d_1, \ldots d_{r_{G_1}}$, so
$$d_{i}x_{ij'}=0 \mod \Z_m, i=1,2,\cdots,r_{G_1},$$
that is, for  $j=1,2,\ldots,n_{G_2}$,
\begin{equation}\label{2nd-equ-1}
x_{ij'}\in\mathbb{Z}_m (m/d_{i}) \cong\mathbb{Z}_{d_{i}}, i=1,2,\cdots,r_{G_1},
\end{equation}
\begin{equation}\label{2nd-equ-2}
x_{ij'}\in\mathbb{Z}_m, i={r_{G_1}}+1,\cdots,n_{G_1}.
\end{equation}

Combining (\ref{1st-equ-1}-\ref{1st-equ-2}) and (\ref{2nd-equ-1}-\ref{2nd-equ-2}), we have four cases:
\begin{itemize}
\item[(i)] for $i=1,2,\cdots,r_{G_1},~j=1,2,\cdots,r_{G_2}$,
$$x_{ij'}\in \Z_m (m/d_{i}) \cap \Z_m (m/\bar{d}_j) \cong\Z_{d_{i}}\cap \Z_{\bar{d}_{j}}\cong \mathbb{Z}_{\lr{d_{i},\bar{d}_j}};$$

\item[(ii)] for $i=1,2,\cdots,r_{G_1},~j={r_{G_2}}+1,\cdots,n_{G_2}$,
$$x_{ij'}\in \Z_m(m/d_{i}) \cap \Z_m \cong\Z_{d_{i}};$$

\item[(iii)] for $i={r_{G_1}}+1,\cdots,n_{G_1},~j=1,2,\cdots,r_{G_2}$,
$$x_{ij'}\in\Z_m  \cap \Z_m (m/\bar{d}_j) \cong\mathbb{Z}_{\bar{d}_{j}};$$

\item[(iv)] for $i={r_{G_1}}+1,\cdots,n_{G_1},~j={r_{G_2}}+1,\cdots,n_{G_2}$,
$$x_{ij'}\in\Z_m\cap \Z_m=\mathbb{Z}_m.$$

\end{itemize}
So
\begin{align*}
\bar{\S} & \cong \left(\oplus_{i\in [r_{G_1}]\atop j\in [r_{G_2}]}\Z_{\lr{d_{i},\bar{d}_j}}\right)
\oplus \left(\oplus_{i\in [r_{G_1}]}(n_{G_2}-r_{G_2})\Z_{d_{i}}\right)\\
& \; \; \; \; \oplus \left(\oplus_{j\in [r_{G_2}]}(n_{G_1}-r_{G_1})\Z_{\bar{d}_{j}}\right)\oplus (n_{G_1}-r_{G_1})(n_{G_2}-r_{G_2})\Z_m.
\end{align*}
Let $\PS=\{\x \in \Z_m^{n}:\In({G_1\Box G_2})\x=\textbf{0} \hbox{~over~} \Z_m, x_1=0\}$.
Then $\PS \cong \S/(\Z_m \mathbf{1}) \cong \bar{\S}/(\Z_m (Q_1 \otimes Q_2)^{-1} \mathbf{1})$ (or see Theorem 3.6 of \cite{FBH}).
By Theorem \ref{stru}, we have $s(G_1\Box G_2) =\frac{1}{m}|\bar{\S}|$, which equals the number as in the theorem.

If $m$ is a prime integer, then $\Z_m$ is a field.
So the invariant divisors of $\In(G_1),\In(G_2)$ over $\Z_m$ are all ones,
and $r_{G_1},r_{G_2}$ are the ranks of $\In(G_1),\In(G_2)$ over $\Z_m$ respectively.
The result follows.
\end{proof}

\section{Cyclic index of hypergraphs}
We first discuss the cyclic index of the coalescence of connected uniform hypergraphs.

\begin{lem}\label{stab-cyc-2bran}
Let $G = G_1(u) \odot G_2(u)$, where $G_1, G_2$ are both nontrivial connected $m$-uniform hypergraphs.
Then $ c(G) =\lr{c(G_1), c(G_2)}$.
\end{lem}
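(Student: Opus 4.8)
The plan is to describe, for each divisor $\ell$ of $m$, exactly when $G$ is spectral $\ell$-symmetric, and then read off $c(G)$. By Corollary \ref{sym-Zm}, for $\ell \mid m$ the hypergraph $G$ is spectral $\ell$-symmetric if and only if $\In(G)\x = \frac{m}{\ell}\mathbf{1}$ is solvable over $\Z_m$, and likewise for $G_1$ and $G_2$. I would use the block form (\ref{inc-G}) of $\In(G)$: writing a candidate vector along the partition $V(G) = (V_1\setminus\{u\}) \cup \{u\} \cup (V_2\setminus\{u\})$ as $\x = (\x^{(1)}, x_u, \x^{(2)})^\top$, the equation $\In(G)\x = \frac{m}{\ell}\mathbf{1}$ decouples into $\In(G_1)(\x^{(1)}, x_u)^\top = \frac{m}{\ell}\mathbf{1}_{E_1}$ and $\In(G_2)(x_u, \x^{(2)})^\top = \frac{m}{\ell}\mathbf{1}_{E_2}$, that is, two coloring equations for $G_1$ and $G_2$ that are forced to take the same value at the coordinate of the cut vertex $u$.

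The heart of the argument is the claim that this coupled system is solvable over $\Z_m$ if and only if each of $\In(G_i)\y = \frac{m}{\ell}\mathbf{1}_{E_i}$ ($i = 1,2$) is solvable. Necessity is immediate. For sufficiency, take solutions $\y_1$ and $\y_2$ of the $G_1$- and $G_2$-equations and let $a$, $b$ be their values at $u$. Since every edge of an $m$-uniform hypergraph has exactly $m$ vertices, each row of $\In(G_1)$ sums to $m$, so $\In(G_1)\mathbf{1}_{V_1} = m\mathbf{1}_{E_1} = \mathbf{0}$ over $\Z_m$; hence $\y_1 + (b-a)\mathbf{1}_{V_1}$ is again a solution of the $G_1$-equation, but now with value $b$ at $u$. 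Gluing this shifted vector to $\y_2$ along $u$ yields a solution of the coupled system. Equivalently, an $(m,\ell)$-coloring may be shifted by a global constant, so the colors of $G_1$ and $G_2$ at $u$ can always be matched. This proves: for every $\ell \mid m$, $G$ is spectral $\ell$-symmetric if and only if both $G_1$ and $G_2$ are.

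From here the gcd formula follows. Taking $\ell = c(G)$ (which divides $m$ by Lemma \ref{1-cyc}) shows $G_1$ and $G_2$ are both spectral $c(G)$-symmetric, so $c(G) \mid c(G_1)$ and $c(G) \mid c(G_2)$ by Lemma \ref{1-cyc}, whence $c(G) \mid \lr{c(G_1), c(G_2)}$. Conversely, put $\ell = \lr{c(G_1), c(G_2)}$; since $\ell \mid c(G_i)$, iterating $\Spec(\A(G_i)) = e^{\i \frac{2\pi}{c(G_i)}}\Spec(\A(G_i))$ a total of $c(G_i)/\ell$ times gives $\Spec(\A(G_i)) = e^{\i \frac{2\pi}{\ell}}\Spec(\A(G_i))$, so $G_i$ is spectral $\ell$-symmetric for $i=1,2$; by the claim $G$ is spectral $\ell$-symmetric, hence $\ell \mid c(G)$. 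The two divisibilities give $c(G) = \lr{c(G_1), c(G_2)}$.

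I expect the only delicate point to be the sufficiency half of the claim, namely reconciling the two coloring equations at the cut vertex; the fact that makes this work — and that is easy to overlook — is that $\mathbf{1}$ lies in the kernel of $\In(G_i)$ over $\Z_m$, so solution sets are stable under global $\Z_m$-shifts. The remainder is routine bookkeeping once the block decomposition from the proof of Lemma \ref{cyc-2bran} is in place.
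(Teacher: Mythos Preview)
Your proof is correct and follows essentially the same approach as the paper: both directions use the coloring characterization (Corollary~\ref{sym-Zm}/\ref{ml-color-G}), with the forward divisibility coming from restriction and the reverse relying on the key observation that $\mathbf{1}$ lies in the kernel of $\In(G_i)$ over $\Z_m$, so colorings can be shifted by a global constant to agree at the cut vertex $u$. The only cosmetic difference is that you first isolate the biconditional ``$G$ is spectral $\ell$-symmetric iff both $G_i$ are'' and then deduce the gcd formula, whereas the paper handles the two divisibilities directly (scaling an $(m,c(G_i))$-coloring by $c(G_i)/\lr{c(G_1),c(G_2)}$ rather than iterating the spectral rotation); the underlying mechanism is identical.
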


\begin{proof}
As $G$ is spectral $c(G)$-symmetric, by Corollary \ref{ml-color-G}, $G$ is $(m,c(G))$-colorable, and has an $(m,c(G))$-coloring $\Phi$.
Noting that $G_1$ and $G_2$ have no common edges, so $\Phi|_{V(G_1)}$, the restriction of $\Phi$ on the vertices of $G_1$, is an $(m,c(G))$-coloring of $G_1$.
So $G_1$ is $(m,c(G))$-colorable and hence is spectral $c(G)$-symmetric by Corollary \ref{ml-color-G}, implying that $c(G) \mid c(G_1)$ by Lemma \ref{1-cyc}.
Similarly, $\Phi|_{V(G_2)}$ is an $(m,c(G))$-coloring of $G_2$, and $c(G) \mid c(G_2)$.
So $c(G) \mid \lr{c(G_1), c(G_2)}$.

Note that $G_1$ is spectral $c(G_1)$-symmetric and $G_2$ is spectral $c(G_2)$-symmetric.
So by Corollary \ref{ml-color-G}, $G_1$ has an $(m,c(G_1))$-coloring $\Phi_{G_1}$, and $G_2$ has a $(m,c(G_2))$-coloring $\Phi_{G_2}$, both satisfying Eq. (\ref{gen-col}).
Observe that for any $t \in \Z_m$, $\Phi_{G_1}+t \mathbf{1}$ is still an $(m,c(G_1))$-coloring of $G_1$ as $mt=0 \mod \Z_m$.
So we can assume $\Phi_{G_1}(u)=0$ and similarly $\Phi_{G_2}(u)=0$.
Define a coloring $\Phi: V(G) \to [m]$ such that
$$\Phi(v)=\left\{
 \begin{array}{cl}
\frac{c(G_1)}{\lr{c(G_1), c(G_2)}}\Phi_{G_1}(v) &  \mbox{if~} v \in V(G_1) \backslash \{u\};\\
0, & \mbox{if~} v=u;\\
\frac{c(G_2)}{\lr{c(G_1), c(G_2)}}\Phi_{G_2}(v) &  \mbox{if~} v \in V(G_2) \backslash \{u\}.
 \end{array}\right.
$$
Then $\Phi|_{G_1}=\frac{c(G_1)}{\lr{c(G_1), c(G_2)}}\Phi_{G_1}$ and $\Phi|_{G_2}=\frac{c(G_2)}{\lr{c(G_1), c(G_2)}}\Phi_{G_2}$.
For each edge $e=\{v_{i_1}, \ldots, v_{i_m}\} \in E(G)$, if $e \in E(G_1)$, by  Eq. (\ref{gen-col}),
\begin{eqnarray*}
 \Phi(v_{i_1})+\cdots+\Phi(v_{i_m})& = & \frac{c(G_1)}{\lr{c(G_1), c(G_2)}}(\Phi_{G_1}(v_{i_1})+\cdots+\Phi_{G_1}(v_{i_m}))\\
 & = & \frac{c(G_1)}{\lr{c(G_1), c(G_2)}}\frac{m}{c(G_1)} \mod \Z_m\\
 & = & \frac{m}{\lr{c(G_1), c(G_2)}} \mod \Z_m.
 \end{eqnarray*}
Similarly, if $e \in E(G_2)$, also by  Eq. (\ref{gen-col}), we have
$$ \Phi(v_{i_1})+\cdots+\Phi(v_{i_m})=\frac{m}{\lr{c(G_1), c(G_2)}} \mod \Z_m.$$
So $G$ has an $(m,\lr{c(G_1), c(G_2)})$-coloring, and hence is spectral $\lr{c(G_1), c(G_2)}$-symmetric, implying $\lr{c(G_1), c(G_2)} \mid c(G)$.

The result follows by the above discussion.
\end{proof}

\begin{thm}\label{stab-cyc-sblock}
Let $G$ be a connected $m$-uniform hypergraph with cut vertices.
If $G$ has $s$ blocks $G_1, \ldots, G_s$, where $s \ge 2$, then
 $c(G)=\gcd\{c(G_i): i \in [s]\}$.
\end{thm}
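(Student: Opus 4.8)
The plan is to reduce the multi-block statement to the two-block Lemma~\ref{stab-cyc-2bran} by induction on the number of blocks $s$, exactly mirroring the argument used for the stabilizing index in Theorem~\ref{cyc-sblock}. First I would observe that since $G$ has cut vertices, it must contain a pendent block, say $G_1$, which shares exactly one vertex $u$ with the rest of the hypergraph; thus one can write $G = G_1(u) \odot H(u)$, where $H$ is the connected sub-hypergraph consisting of the remaining $s-1$ blocks $G_2, \ldots, G_s$. Applying Lemma~\ref{stab-cyc-2bran} gives $c(G) = \lr{c(G_1), c(H)}$.

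Next I would set up the induction. The base case $s = 2$ is precisely Lemma~\ref{stab-cyc-2bran}, which says $c(G) = \lr{c(G_1), c(G_2)} = \gcd\{c(G_1), c(G_2)\}$. For the inductive step, assuming the formula holds for any connected $m$-uniform hypergraph with fewer than $s$ blocks, I apply it to $H$ (which has $s-1$ blocks $G_2, \ldots, G_s$) to get $c(H) = \gcd\{c(G_i): 2 \le i \le s\}$. Then
\[
c(G) = \lr{c(G_1), c(H)} = \gcd\bigl\{c(G_1), \gcd\{c(G_i): 2 \le i \le s\}\bigr\} = \gcd\{c(G_i): i \in [s]\},
\]
using associativity of $\gcd$. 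This closes the induction.

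The only point requiring a little care is the claim that $H$ is itself a connected $m$-uniform hypergraph whose blocks are exactly $G_2, \ldots, G_s$, so that the inductive hypothesis genuinely applies; this is a standard fact about block decompositions of connected (hyper)graphs, and removing a pendent block leaves the block structure of the remainder intact. A second minor subtlety is ensuring $H$ is nontrivial (has more than one vertex) so that Lemma~\ref{stab-cyc-2bran} applies: since $s \ge 2$, $H$ contains at least one edge and hence at least $m \ge 2$ vertices. I do not expect any real obstacle here — the work was already done in Lemma~\ref{stab-cyc-2bran}, and this theorem is a routine bootstrapping, entirely parallel to the passage from Lemma~\ref{cyc-2bran} to Theorem~\ref{cyc-sblock}.
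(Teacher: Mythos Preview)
Your proposal is correct and follows essentially the same approach as the paper: peel off a pendent block, apply Lemma~\ref{stab-cyc-2bran}, and induct on the number of blocks. The paper's proof is terser but structurally identical, and the extra care you take in verifying that $H$ is nontrivial and inherits the block decomposition is appropriate.
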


\begin{proof}
Observer that $G$ contains a pendent block, say $G_1$.
Write $G=G_1 \odot H$, where $H$ has $s-1$ blocks.
By Lemma \ref{stab-cyc-2bran}, $c(G)=\gcd(c(G_1), c(H))$.
The result follows by induction on $s$.
\end{proof}

\begin{cor}\label{cyc-tree2}
Let $T$ be an $m$-uniform hypertree with $s$ edges.
Then $c(T)=m$.
\end{cor}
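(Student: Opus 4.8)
The plan is to mirror the proof of Corollary \ref{cyc-tree}, reducing everything to the one-edge hypergraph via the block decomposition. First I would note that in an $m$-uniform hypertree every edge $e$ is a block: a single edge has no cut vertex, and since $T$ is acyclic, no two distinct edges can lie in a common block without creating a cycle. Hence $T$ has exactly $s$ blocks, each a copy of the one-edge hypergraph $e$ on its $m$ vertices.

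Next I would verify that $c(e)=m$. Writing $e=\{v_1,\dots,v_m\}$, the map $\phi:V(e)\to[m]$ with $\phi(v_1)=1$ and $\phi(v_i)=0$ for $i\ge 2$ satisfies $\phi(v_1)+\cdots+\phi(v_m)\equiv 1 = \tfrac{m}{m} \pmod m$, so $\phi$ is an $(m,m)$-coloring of $e$. By Corollary \ref{ml-color-G}, $e$ is spectral $m$-symmetric, and by Lemma \ref{1-cyc} we have $c(e)\mid m$; therefore $c(e)=m$. (Equivalently, one can invoke Corollary \ref{sym-Zm}: the relevant system $\In(e)\x=\mathbf{1}$ over $\Z_m$ is the single congruence $x_1+\cdots+x_m\equiv 1 \pmod m$, which is obviously solvable.)

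Finally I would split on $s$. If $s=1$ then $T=e$ and the claim is exactly the previous paragraph. If $s\ge 2$, then $T$ has cut vertices and its blocks $G_1,\dots,G_s$ are all copies of $e$, so Theorem \ref{stab-cyc-sblock} gives
\[
c(T)=\gcd\{c(G_i):i\in[s]\}=\gcd\{m,\dots,m\}=m.
\]

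Since all the needed ingredients are already established, I do not expect a real obstacle here; the only point requiring a little care is that Theorem \ref{stab-cyc-sblock} is stated only for $s\ge 2$, so the one-edge hypertree must be handled directly from the definition of the cyclic index (via the explicit $(m,m)$-coloring above).
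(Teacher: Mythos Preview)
Your argument is correct and essentially identical to the paper's: each edge is a block, a single edge has cyclic index $m$, and Theorem \ref{stab-cyc-sblock} finishes the case $s\ge 2$. The only cosmetic differences are that the paper invokes Theorem~3.2 of \cite{SQH} for the spectral $m$-symmetry of a single edge instead of writing down an explicit $(m,m)$-coloring, and that strictly speaking Definition~\ref{spe-ell-sym-graph} takes values in $[m]=\{1,\dots,m\}$, so your value $0$ should be written as $m$ (the congruence is of course unaffected).
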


\begin{proof}
By the definition, each edge $e$ of $T$ is a block.
So $T$ has $s$ blocks.
Considering $e$ as an $m$-uniform hypergraph, $e$ is spectral $m$-symmetric by Theorem 3.2 of \cite{SQH}.
So $c(e)=m$ by Lemma \ref{1-cyc}.
The  result follows by Theorem \ref{stab-cyc-sblock}.
\end{proof}

Finally we discuss the cyclic index of the Cartesian product of  connected uniform hypergraphs.

\begin{thm}\label{cyc-cart-2}
Let $G_1,G_2$ be two connected $m$-uniform hypergraphs.
Then
$$c(G_1\Box G_2)=\lr{c(G_1),c(G_2)}.$$
\end{thm}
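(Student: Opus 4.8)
The plan is to reduce the statement to a question about the solvability of linear systems over $\Z_m$, exactly as in Corollary~\ref{sym-Zm}, and then exploit the block structure of $\In(G_1\Box G_2)$ given by Lemma~\ref{inc-cart}. By Corollary~\ref{sym-Zm}, for a divisor $\ell$ of $m$ the hypergraph $G_1\Box G_2$ is spectral $\ell$-symmetric if and only if the equation
\[
\In(G_1\Box G_2)\,\x=\frac{m}{\ell}\mathbf{1}\hbox{ over }\Z_m
\]
has a solution, and $c(G_1\Box G_2)$ is the largest such $\ell$. Writing $\x$ as an $n_{G_1}\times n_{G_2}$ array $(x_{ij'})$ and using the block form
\[
\In(G_1\Box G_2)=\begin{pmatrix} I_{V(G_1)}\otimes \In(G_2)\\ \In(G_1)\otimes I_{V(G_2)}\end{pmatrix},
\]
the single system splits into two families: $\In(G_2)\x_{i\cdot}=\frac{m}{\ell}\mathbf{1}$ over $\Z_m$ for each row $i\in[n_{G_1}]$, and $\In(G_1)\x_{\cdot j'}=\frac{m}{\ell}\mathbf{1}$ over $\Z_m$ for each column $j'\in[n_{G_2}]$. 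So $G_1\Box G_2$ is $(m,\ell)$-colorable precisely when one can fill the array so that every row is an $(m,\ell)$-coloring of $G_2$ and every column is an $(m,\ell)$-coloring of $G_1$.

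First I would prove $c(G_1\Box G_2)\mid \lr{c(G_1),c(G_2)}$. Set $\ell:=c(G_1\Box G_2)$ and take such an array $(x_{ij'})$. Each column is an $(m,\ell)$-coloring of $G_1$, so $G_1$ is $(m,\ell)$-colorable, whence $\ell\mid c(G_1)$ by Corollary~\ref{ml-color-G} and Lemma~\ref{1-cyc}; symmetrically $\ell\mid c(G_2)$. Hence $\ell\mid\lr{c(G_1),c(G_2)}$.

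For the reverse divisibility I would construct an explicit $(m,d)$-coloring of $G_1\Box G_2$ where $d:=\lr{c(G_1),c(G_2)}$, which by Corollary~\ref{ml-color-G} and Lemma~\ref{1-cyc} forces $d\mid c(G_1\Box G_2)$ and finishes the proof. Pick an $(m,c(G_1))$-coloring $\phi_1$ of $G_1$ and an $(m,c(G_2))$-coloring $\phi_2$ of $G_2$. Mimicking the rescaling trick of Lemma~\ref{stab-cyc-2bran}, the map $\psi_1:=\frac{c(G_1)}{d}\phi_1$ satisfies $\In(G_1)\psi_1\equiv\frac{c(G_1)}{d}\cdot\frac{m}{c(G_1)}=\frac{m}{d}\mathbf 1$ over $\Z_m$, i.e. $\psi_1$ is an $(m,d)$-coloring of $G_1$; similarly $\psi_2:=\frac{c(G_2)}{d}\phi_2$ is an $(m,d)$-coloring of $G_2$. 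Now I would try the ansatz $x_{ij'}:=\psi_1(i)+\psi_2(j')$. Along a row $i$, the column vector $\x_{i\cdot}$ equals $\psi_2+\psi_1(i)\mathbf 1$, and since adding a multiple of $\mathbf 1$ changes $\In(G_2)\x_{i\cdot}$ by $\psi_1(i)\cdot m\mathbf 1=\mathbf 0$ over $\Z_m$, we get $\In(G_2)\x_{i\cdot}=\In(G_2)\psi_2=\frac{m}{d}\mathbf 1$ over $\Z_m$; symmetrically every column works. Thus $(x_{ij'})$ solves $\In(G_1\Box G_2)\x=\frac{m}{d}\mathbf 1$ over $\Z_m$, so $G_1\Box G_2$ is $(m,d)$-colorable.

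The only genuinely delicate point is bookkeeping of the ``$\mathbf 1$ vs.\ scalar'' normalization: $\frac{m}{d}$ really is an integer in $\{0,1,\dots,m-1\}$ since $d\mid m$, and the row/column all-one vectors $\mathbf 1_{V(G_2)}$, $\mathbf 1_{V(G_1)}$ must be kept straight when restricting the big system $\In(G_1\Box G_2)\x=\frac{m}{d}\mathbf 1_{V(G_1\Box G_2)}$ to the two bands coming from Lemma~\ref{inc-cart}. One also needs $c(G_1\Box G_2)\mid m$ (Lemma~\ref{1-cyc}, using that $\A(G_1\Box G_2)$ is symmetric weakly irreducible, the latter by connectedness of the Cartesian product) so that it suffices to compare divisors of $m$. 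Beyond that the argument is a clean splitting of a linear system plus the rescaling device already used for coalescence, so I expect no real obstacle.
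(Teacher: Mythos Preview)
Your proposal is correct and follows essentially the same approach as the paper: both split the linear system $\In(G_1\Box G_2)\x=\frac{m}{\ell}\mathbf{1}$ via the block form of Lemma~\ref{inc-cart} into per-row and per-column systems to obtain $c(G_1\Box G_2)\mid\lr{c(G_1),c(G_2)}$, and both build the reverse direction from the same ansatz $x_{ij'}=\frac{c(G_1)}{d}\phi_1(i)+\frac{c(G_2)}{d}\phi_2(j')$, which the paper writes as $\Phi=\frac{c_1}{\lr{c_1,c_2}}\Phi_{G_1}\otimes\mathbf{1}_{V(G_2)}+\mathbf{1}_{V(G_1)}\otimes\frac{c_2}{\lr{c_1,c_2}}\Phi_{G_2}$. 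The only difference is cosmetic: you verify the construction coordinatewise using the ``add a multiple of $\mathbf{1}$'' observation, while the paper does the same check in Kronecker-product notation.
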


\begin{proof}
Let $c(G_1\Box G_2)=:c$.
 By Corollary \ref{sym-Zm}, there exists a map $\Phi: V(G_1 \Box G_2) \rightarrow [m]$, such that
\begin{equation}\label{equ-cartP}
\In({G_1\Box G_2})\Phi=\frac{m}{c}\mathbf{1} \hbox{~over~} \Z_m.
\end{equation}
By the definition of Cartesian product, the edge set of  $G_1\Box G_2$ has a bipartition $\{E_1,E_2\}$; see (\ref{2nd-edge}) and (\ref{1st-edge}).

We first consider a subset $E(G_1) \times \{v\}$ of $E_2$ in (\ref{1st-edge}), where $v \in V(G_2)$ is a specified vertex.
Restricting (\ref{equ-cartP}) on $E(G_1)\times \{v\}$, we have
\begin{equation}\label{restrict} \In({G_1\Box G_2})[E(G_1)\times \{v\}, V(G_1) \times V(G_2)] \cdot \Phi=\frac{m}{c}\mathbf{1} \hbox{~over~} \Z_m,
\end{equation}
where $A[I_1,I_2]$ denoted the submatrix of $A$ with rows indexed by $I_1$ and columns indexed by $I_2$.
By the definition, if $v' \in V(G_2)$, $v' \ne v$, then
$$ \In({G_1\Box G_2})[E(G_1)\times \{v\}, V(G_1) \times \{v'\}] =0.$$
So, by (\ref{restrict}), we have
\begin{equation}\label{GH2G} \In({G_1\Box G_2})[E(G_1)\times \{v\}, V(G_1) \times \{v\}] \cdot \Phi[V(G_1) \times \{v\}]=\frac{m}{c}\mathbf{1} \hbox{~over~} \Z_m,
\end{equation}
where $\Phi[V(G_1) \times \{v\}]$ denotes the sub-vector of $\Phi$ indexed by $V(G_1) \times \{v\}$.
The Eq. (\ref{GH2G}) is equivalent to
$$ \In(G_1) \cdot \Phi[V(G_1) \times \{v\}]=\frac{m}{c}\mathbf{1} \mod \Z_m.$$
By Corollary \ref{sym-Zm}, $G_1$ is spectral $c$-symmetric, and hence $c | c(G_1)$ by Lemma \ref{1-cyc}.

Similarly, we consider a subset $\{u\}\times E(G_2)$ of $E_1$ in (\ref{2nd-edge}), where $u \in V(G_1)$ is a specified vertex.
Restricting (\ref{equ-cartP}) on $\{u\}\times E(G_2)$, we have
\begin{equation}\label{restrict2} \In({G_1\Box G_2})[\{u\}\times E(G_2), V(G_1) \times V(G_2)] \cdot \Phi=\frac{m}{c}\mathbf{1} \hbox{~over~} \Z_m,
\end{equation}
If $u' \in V(G_1)$, $u' \ne u$, then
$$ \In({G_1\Box G_2})[\{u\}\times E(G_2), \{u'\} \times V(G_2) ] =0.$$
So
\begin{equation}\label{GH2H} \In({G_1\Box G_2})[\{u\}\times E(G_2), \{u\} \times V(G_2)] \cdot \Phi[\{u\} \times V(G_2)]=\frac{m}{c}\mathbf{1} \hbox{~over~} \Z_m.
\end{equation}
Eq. (\ref{GH2H}) is equivalent to
$$ \In(G_2) \cdot \Phi[\{u\} \times V(G_2)]=\frac{m}{c}\mathbf{1} \mod \Z_m.$$
By Corollary \ref{sym-Zm}, $G_2$ is spectral $c$-symmetric, and hence $c | c(G_2)$.

By the above discussion, we have $c(G_1 \Box G_2) |\lr{c(G_1), c(G_2)}$.
Next we will show $G_1 \Box G_2$ is spectral $\lr{c(G_1), c(G_2)}$-symmetric so that $c(G_1 \Box G_2)=\lr{c(G_1), c(G_2)}$.
By the definition of cyclic index and Corollary \ref{sym-Zm}, there exist maps $\Phi_{G_1}, \Phi_{G_2}$  such that
$$ \In(G_1) \Phi_{G_1}=\frac{m}{c(G_1)}\mathbf{1}_{E(G_1)} \mod \Z_m, \In(G_2) \Phi_{G_2}=\frac{m}{c(G_2)}\mathbf{1}_{E(G_2)} \hbox{~over~} \Z_m.$$
Let $c_1:=c(G_1)$ and $c_2:=c(G_2)$. Define
$$\Phi=\mathbf{1}_{V(G_1)}\otimes \frac {c_2}{\lr{c_1,c_2}}\Phi_{G_2}+\frac {c_1}{\lr{c_1,c_2}}\Phi_{G_1}\otimes\mathbf{1}_{V(G_2)}.$$
By Lemma \ref{inc-cart}, noting that $\In({G_1})\cdot\mathbf{1}=m \mathbf{1} =\mathbf{0} \hbox{~over~} \Z_m$ and
$\In({G_2})\cdot\mathbf{1}= \mathbf{0} \hbox{~over~} \Z_m$ similarly, we have
\begin{align*}
\In({G_1\Box G_2})\Phi &=\left[\begin{array}{c}
\In({G_1})\otimes I_{V(G_2)}\\
I_{V(G_1)}\otimes \In({G_2})\\
\end{array}\right]
\left[ \mathbf{1}_{V(G_1)}\otimes \frac {c_2}{\lr{c_1,c_2}}\Phi_{G_2}+\frac {c_1}{\lr{c_1,c_2}}\Phi_{G_1}\otimes\mathbf{1}_{V(G_2)} \right]\\
&=\left[
\begin{array}{c}
(\In({G_1})\cdot\frac{c_1}{\lr{c_1,c_2}}\Phi_{G_1})\otimes(I_{V(G_2)}\cdot\mathbf{1}_{V(G_2)})\\
(I_{V(G_1)}\cdot\mathbf{1}_{V(G_1)})\otimes(\In({G_2})\cdot\frac {c_2}{\lr{c_1,c_2}}\Phi_{G_2})\\
\end{array}\right]\\
&=\left[
\begin{array}{c}
\frac{c_1}{\lr{c_1,c_2}}\cdot\frac{m}{c_1} \cdot \mathbf{1}_{E(G_1)}\otimes\mathbf{1}_{V(G_2)})\\
\mathbf{1}_{V(G_1)}\otimes\frac {c_2}{\lr{c_1,c_2}}\cdot\frac{m}{c_2}\cdot\mathbf{1}_{E(G_2)}
\end{array}\right]
=\left[
\begin{array}{c}
\frac{m}{\lr{c_1,c_2}}\mathbf{1}_{E(G_1)\times V(G_2)}\\
\frac{m}{\lr{c_1,c_2}}\mathbf{1}_{V(G_1)\times E(G_2)}\\
\end{array}\right]\\
&=\frac{m}{\lr{c_1,c_2}}\mathbf{1} \hbox{~over~} \Z_m.
\end{align*}
By Corollary \ref{sym-Zm}, $G_1\Box G_2$ is spectral $\lr{c_1,c_2}$-symmetric.
The result follows.
\end{proof}

The Cartesian product can be defined on finitely many hypergraphs \cite{Im,Gri}.
For $m$-uniform hypergraphs $G_1, \ldots, G_s$,
the \emph{Cartesian product} $G =\Box_{i=1}^s G_i$ is defined as an $m$-uniform hypergraph with vertex set
$V(G) = \times_{i \in [s]} V(G_i)$, and $E(G)$ consists of $m$-subsets $e$ of $V(G)$ such that
$p_j(e) \in E(G_j)$ for exactly one $j \in [s]$ and $|p_i(e)|=1$ for $ i \ne j$, where, for $j \in [s]$,
$p_j: V(G) \to V(G_j)$ is the projection of the Cartesian product of the vertex sets in to $V(G_j)$.
Here, $G_1, \ldots, G_s$ are called the \emph{factors} of $G$.
It is proved that $G =\Box_{i=1}^s G_i$ is connected if and only if all of its factors $G_1,\ldots,G_s$ are connected \cite{Im,Gri}.

\begin{cor}
Let $G_1, \ldots, G_s$ be connected $m$-uniform hypergraphs.
Then
$$ c(\Box_{i=1}^s G_i)=\gcd\{c(G_i): i \in [s]\}.$$
\end{cor}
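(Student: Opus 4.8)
The plan is to reduce the general statement to the binary case already established in Theorem~\ref{cyc-cart-2} and then induct on the number of factors $s$. The key structural input is the associativity of the Cartesian product of uniform hypergraphs: for connected $m$-uniform hypergraphs $G_1,\ldots,G_s$ one has $\Box_{i=1}^s G_i \cong \left(\Box_{i=1}^{s-1} G_i\right) \Box\, G_s$ up to an obvious relabeling of vertices and edges. This identification should be checked directly from the definition given just before the statement: a vertex of $\Box_{i=1}^s G_i$ is a tuple $(x_1,\ldots,x_s)$, which we regroup as $\big((x_1,\ldots,x_{s-1}),x_s\big)$; and an edge $e$ of $\Box_{i=1}^s G_i$ is characterized by $p_j(e)\in E(G_j)$ for exactly one $j$ and $|p_i(e)|=1$ otherwise, which corresponds under the regrouping to: either $j=s$, so the $G_s$-coordinate moves along an edge of $G_s$ while the first $s-1$ coordinates are constant (this is condition (2) in the binary Cartesian product with $H=G_s$ and $G=\Box_{i=1}^{s-1}G_i$), or $j\le s-1$, in which case the $G_s$-coordinate is constant and $p_j$ restricted to the edge lies in $E(G_j)$ with the other first $s-1$ coordinates constant, i.e.\ $\big(\Box_{i=1}^{s-1}G_i\big)$-coordinate moves along an edge of $\Box_{i=1}^{s-1}G_i$ (this is condition (1)). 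Connectedness of the iterated product, needed so that $c(\cdot)$ is well defined via the Perron--Frobenius theory, is guaranteed by the cited fact that $\Box_{i=1}^s G_i$ is connected iff all factors are.

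With associativity in hand, the induction is routine. The base case $s=1$ is trivial and $s=2$ is precisely Theorem~\ref{cyc-cart-2}. For $s\ge 3$, set $H:=\Box_{i=1}^{s-1}G_i$, which is connected, so by the induction hypothesis $c(H)=\gcd\{c(G_i): i\in[s-1]\}$. Applying Theorem~\ref{cyc-cart-2} to the connected hypergraphs $H$ and $G_s$ gives
\[
c\big(\Box_{i=1}^s G_i\big)=c(H\Box G_s)=\lr{c(H),c(G_s)}=\Big\langle \gcd\{c(G_i):i\in[s-1]\},\, c(G_s)\Big\rangle=\gcd\{c(G_i):i\in[s]\},
\]
using that $\gcd$ is associative. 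This completes the induction.

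I expect the only genuine point requiring care is the verification that the regrouping of coordinates is an isomorphism of hypergraphs, i.e.\ that the edge-conditions match exactly; this is a bookkeeping check on the projections $p_j$ and the ``exactly one $j$'' clause, and it is entirely straightforward but must be spelled out since the multi-factor definition was only introduced informally. Everything else is an immediate appeal to Theorem~\ref{cyc-cart-2} and the associativity of $\gcd$. One could alternatively give a direct argument bypassing associativity, mirroring the proof of Theorem~\ref{cyc-cart-2}: realize $\In(\Box_{i=1}^s G_i)$ as a block matrix with $s$ blocks of the form $I\otimes\cdots\otimes \In(G_j)\otimes\cdots\otimes I$, and show via Corollary~\ref{sym-Zm} that $\In(\Box_{i=1}^s G_i)\Phi=\frac{m}{\ell}\mathbf 1$ has a solution iff each $\In(G_j)\x=\frac{m}{\ell}\mathbf 1$ does (the forward direction by restricting $\Phi$ to a ``line'' in the $j$-th direction, the backward direction by taking $\Phi=\sum_j \frac{c_j}{\ell}\,\mathbf 1\otimes\cdots\otimes\Phi_{G_j}\otimes\cdots\otimes\mathbf 1$ with $\ell=\gcd_j c(G_j)$ and using $\In(G_i)\mathbf 1=\mathbf 0$ over $\Z_m$). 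This is more self-contained but longer; the inductive route is cleaner and I would adopt it.
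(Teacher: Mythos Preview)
Your proposal is correct and follows essentially the same route as the paper: invoke associativity of the Cartesian product to write $\Box_{i=1}^s G_i=(\Box_{i=1}^{s-1}G_i)\Box G_s$, apply Theorem~\ref{cyc-cart-2}, and induct on $s$. The paper's proof is in fact terser than yours, taking associativity for granted without the explicit bookkeeping check you outline.
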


\begin{proof}
As the Cartesian product is associative, we can write
$$ G:=\Box_{i=1}^s G_i= (\Box_{i=1}^{s-1}G_i) \Box G_s.$$
So, by Theorem  \ref{cyc-cart-2}, $c(G)=\gcd(c(\Box_{i=1}^{s-1}G_i),c(G_s)$.
The result follows by induction on $s$.
\end{proof}

\end{document}